\documentclass[12pt]{article}
\usepackage[T1]{fontenc}
\usepackage{lmodern,amsmath,amssymb,amsthm,mathtools}
\usepackage[english]{babel}
\usepackage[margin=1in]{geometry}
\usepackage{booktabs,array,microtype}
\usepackage[colorlinks=true,allcolors=blue,breaklinks=true]{hyperref}
\hypersetup{pdftitle={Ergodicity, memory loss, and directional stability on von Neumann algebras},pdfauthor={Perplexity Computer}}
\newtheorem{theorem}{Theorem}[section]
\newtheorem{proposition}[theorem]{Proposition}
\newtheorem{corollary}[theorem]{Corollary}
\newtheorem{lemma}[theorem]{Lemma}
\theoremstyle{definition}
\newtheorem{definition}[theorem]{Definition}
\newtheorem{example}[theorem]{Example}
\newtheorem{remark}[theorem]{Remark}
\newcommand{\M}{\mathcal M}
\newcommand{\Sn}{\mathcal S_{\mathcal N}(\M)}
\newcommand{\one}{\mathbf 1}
\newcommand{\id}{\mathrm{id}}
\newcommand{\R}{\mathbb R}
\newcommand{\N}{\mathbb N}
\newcommand{\norm}[1]{\left\lVert#1\right\rVert}
\DeclareMathOperator{\osc}{osc}
\DeclareMathOperator{\Fix}{Fix}
\DeclareMathOperator{\Tr}{Tr}
\DeclareMathOperator{\diam}{diam}
\DeclareMathOperator{\ran}{ran}

\allowdisplaybreaks
\title{Ergodicity, memory loss, and directional stability on von Neumann algebras}

\author{%
  Abdessatar Souissi\thanks{Email: \texttt{a.souaissi@qu.edu.sa}}\\
  \small Department of Mathematical Physics, Preparatory Institute for Engineering studies,\\
  \small University of Monastir, Monastir, Tunisia\\
  \and
  Abdessatar Barhoumi\thanks{Email: \texttt{abarhoumi@kfu.edu.sa}}\\
  \small Department of Mathematics and Statistics, College of Science,\\
  \small King Faisal University, Al-Ahsa P.O. Box 400, 31982, Saudi Arabia
}
\date{}

\begin{document}
\maketitle
\tableofcontents
\begin{abstract}
For a deterministic sequence of normal unital completely positive maps on a von Neumann algebra, convergence of normal states, loss of memory, and convergence of Ces\`aro averages are different properties. This paper separates these properties in the predual norm and identifies the precise role of composition order. Uniform loss of memory for right-appended products on the predual implies uniform convergence to a normal state without any compactness or trace assumption; the convergence rate is bounded by the image diameter. In contrast, on every infinite-dimensional $\sigma$-finite von Neumann algebra there are right-appended products with pointwise loss of memory but no convergent normal-state trajectory, even though all nontrivial state images equal a fixed compact line segment. Consequently, finite dimensionality is characterized by the universal implication from pointwise backward memory loss to mixing. An attained normal-functional minorization coefficient extends the common-lower-bound construction for matrix channels to arbitrary von Neumann algebras and yields quantitative contraction and unique normal entrance laws. Further results distinguish pointwise from uniform relaxation, Ces\`aro convergence from mixing, fixed-start from time-uniform contraction, and stationary from moving asymptotic states. As an application, inhomogeneous transfer states on spin chains, including matrix product states, are shown to have a unique thermodynamic limit, uniformly in the boundary condition, with an arbitrary von Neumann algebra as bond algebra; boundary dependence and correlations are controlled by the same product coefficients. All counterexamples are explicit and extend to algebras of types I, II, and III.
\end{abstract}
\noindent\textbf{MSC 2020.} 46L55, 46L53, 47A35, 81P47, 82B10.\\
\noindent\textbf{Keywords.} Normal quantum channel; von Neumann algebra; predual; Dobrushin coefficient; normal minorization; inhomogeneous process; entrance law; matrix product state; finitely correlated state.

\section{Introduction}\label{sec:intro}


The ergodic theory of quantum dynamics has developed along two largely separate lines. The first is operator algebraic. Here the dynamics is a normal positive map, an automorphism, or a semigroup on a von Neumann algebra, usually with a distinguished invariant normal state. Mean and individual ergodic theorems were obtained by Lance for automorphisms and convex sets of states \cite{Lan76}, and by Yeadon for semifinite algebras in the noncommutative $L^1$ setting \cite{Yea80}. Junge and Xu later proved maximal ergodic inequalities in noncommutative $L^p$-spaces \cite{JX07}. The long-time behaviour of dynamical semigroups on $W^*$-algebras was analysed by Frigerio and Verri \cite{FV82}. Kümmerer's theory of Markov dilations embeds completely positive dynamics with a faithful invariant state into reversible dynamics \cite{Kum85}. Fidaleo studied strong ergodic properties, including unique ergodicity and uniform convergence of Ces\`aro averages \cite{Fid09}. More recent work on the structure of ergodic dynamics includes the existence of ergodic states on type $\mathrm{III}_1$ factors, due to Marrakchi and Vaes \cite{MV24}, and a multiplicative ergodic theorem for cocycles with values in a finite von Neumann algebra, due to Bowen, Hayes and Lin \cite{BHL21}. In this tradition the dynamics is typically homogeneous, and convergence is studied in weak-star, strong, or noncommutative $L^p$ topologies relative to a fixed state.

The second line comes from quantum information theory. A quantum channel on a matrix algebra is ergodic or mixing according to spectral properties of its invariant states and its peripheral spectrum \cite{BCGPY}. The peripheral structure, period, and cyclic decomposition of a channel were analysed by Carbone and Jen\v{c}ov\'a \cite{CJ20}. Albert related the asymptotic projection of a channel to its conserved quantities and to matrix product states \cite{Alb19}. Quantitative contraction is central in this line. Strict contractivity of noise channels limits fault-tolerant computation \cite{Rag02}. Quantum Doeblin coefficients give computable upper bounds on trace-distance contraction coefficients, with applications to time-varying channels \cite{Hir24,GHNW25}. Within quantum probability, Accardi, Lu and Souissi proved a Markov--Dobrushin inequality for channels \cite{ALS21}. Souissi and Barhoumi showed that a positive Markov--Dobrushin constant implies exponential mixing, and applied this to matrix product states \cite{SB25}.


When the channel changes in time, the iterates of a single map are replaced by ordered products, and no invariant state need exist. In classical probability this situation is governed by Dobrushin's ergodicity coefficient. Its extension to abstract state spaces and to von Neumann algebra preduals is due to Mukhamedov \cite{Muk13,Muk15}. Mukhamedov and Al-Rawashdeh treated weak and uniform ergodicity of nonhomogeneous chains relative to a projection \cite{MA22}. Gaubert and Qu established the duality between contraction and oscillation for Markov operators on cones \cite{GQ}.

A second group of results concerns random products. Bruneau, Joye and Merkli studied random repeated interaction systems \cite{BJM08}. Movassagh and Schenker obtained an ergodic theorem for processes driven by an ergodic dynamical system, with exponential convergence to an ergodic sequence of states that does not depend on the initial state \cite{MS21,MS22}. Ekblad and Schenker developed a reducibility theory without the decoherence hypothesis of this framework \cite{ES}. Nelson and Roon extended the stationary random theory to finite von Neumann algebras, using a projective metric on the cone of positive elements \cite{NR}. Pathirana and Schenker studied continuous-time inhomogeneous Markovian dynamics in a stationary random environment \cite{PS25}. Ergodic theorems for quantum trajectories form a related direction \cite{KM04,BFPP19,EMP25}.

For deterministic inhomogeneous channels on matrix algebras, Souissi \cite{Sou26} separated the forward and backward composition orders. That paper showed that weak and strong mixing coincide for backward products, which are automatically nested, and derived convergence rates from common positive lower bounds of channel outputs. Pathirana \cite{P26} developed a product-level trace-Dobrushin theory for finite-dimensional channel products. In the deterministic case, decay of the product coefficient is equivalent to asymptotic replacement by a moving rank-one channel; for two-sided products, pullback forgetting gives a unique boundary state.


Quantum Markov chains originate in Accardi's noncommutative Markov property \cite{Acc75}. Later constructions include entangled Markov chains \cite{AF05} and a unifying framework for the existing definitions \cite{ASS20}. Finitely correlated states were introduced by Fannes, Nachtergaele and Werner \cite{FNW92}, and their pure case was analysed in \cite{FNW94}. Matrix product states are their pure vector counterparts, and their representation theory was systematised in \cite{PVWC07}.

In all these constructions the state of a spin chain is produced by iterating a transfer map on an auxiliary, or bond, algebra. Consequently, the thermodynamic limit, boundary independence, and decay of correlations are statements about products of channels. For translation-invariant or ergodic chains this connection appears in \cite{MS22,SB25}. For non-translation-invariant chains, \cite{Sou26} expresses the infinite-volume state through a backward sequence of boundary states, and \cite{P26} obtains infinite-volume limits and correlation bounds from product coefficients. Each of these works assumes a finite-dimensional bond space.

The transfer construction itself does not require this. It defines states for an arbitrary auxiliary von Neumann algebra, for instance $B(K)$ with $K$ infinite dimensional, or a $\mathrm{II}_1$ factor. Finite dimensionality is what makes such a state ``finitely'' correlated in the sense of \cite{FNW92}, but it is not needed for the construction. The theory developed below applies directly to this setting.


This paper asks which parts of the finite-dimensional, deterministic theory of \cite{Sou26} remain valid on an arbitrary von Neumann algebra $\M$, and which fail. Three features of the infinite-dimensional setting are decisive. The normal state space is in general not norm compact. There may be no trace, no density operators, and no normal pure states. Finally, in applications to spin chains, the bond algebra itself can be infinite dimensional.

The predual Dobrushin coefficient and its submultiplicativity are known \cite{Muk13,Muk15} and are not presented as new. The contribution of this paper is to determine exactly where directional stability holds for arbitrary deterministic sequences of normal channels. All composition directions refer to maps on states, not to maps on observables. Throughout, $\Sn$ denotes the set of \emph{normal} states of $\M$; the subscript $\mathrm n$ stands for ``normal'' and is not an integer index (see Section~\ref{sec:setting}).

The first two main results describe this boundary.

\begin{theorem}[Uniform backward stability]\label{thm:intro-uniform}
Let $\M$ be a von Neumann algebra, let $T_n=(\Phi_n)_*$ be the preduals of normal unital completely positive maps, and put $B_n=T_0\cdots T_{n-1}$. Then
\[
 \sup_{\varphi,\psi\in\Sn}\norm{B_n\varphi-B_n\psi}\longrightarrow0
\]
if and only if there exists $\omega\in\Sn$ such that
\[
 \sup_{\varphi\in\Sn}\norm{B_n\varphi-\omega}\longrightarrow0.
\]
In that case, for every $n$,
\[
 \sup_{\varphi\in\Sn}\norm{B_n\varphi-\omega}
 \leq \sup_{\varphi,\psi\in\Sn}\norm{B_n\varphi-B_n\psi}.
\]
No norm compactness of the state space or of a channel image is required.
\end{theorem}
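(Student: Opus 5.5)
The plan rests on one observation: the images $B_n(\Sn)$ are nested decreasing subsets of $\Sn$. Write $\delta_n=\sup_{\varphi,\psi\in\Sn}\norm{B_n\varphi-B_n\psi}$, the diameter of $B_n(\Sn)$ in the predual norm. Since $B_{n+1}=B_nT_n$ and $T_n$ maps $\Sn$ into $\Sn$, we get $B_{n+1}(\Sn)=B_n(T_n(\Sn))\subseteq B_n(\Sn)$. In particular $\delta_n$ is nonincreasing.

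The easy direction comes first. If $\sup_\varphi\norm{B_n\varphi-\omega}\to0$, then the triangle inequality gives
\[
\delta_n\le 2\sup_{\varphi}\norm{B_n\varphi-\omega}\to0.
\]

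For the converse, assume $\delta_n\to0$ and fix any $\varphi_0\in\Sn$. I would show that $x_n=B_n\varphi_0$ is Cauchy in the predual norm. For $m\ge n$ we have $x_m=B_n(T_n\cdots T_{m-1}\varphi_0)\in B_n(\Sn)$, and $x_n\in B_n(\Sn)$ as well, so $\norm{x_m-x_n}\le\delta_n$. The predual $\M_*$ is a Banach space, being a closed subspace of $\M^*$, so $x_n$ converges in norm to some $\omega\in\M_*$. The normal state space $\Sn$ is norm closed in $\M_*$: positivity is preserved under norm limits of normal functionals, and $\omega(\one)=\lim x_n(\one)=1$. Hence $\omega\in\Sn$. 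This is the only place where any completeness is used, and it replaces compactness entirely.

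For the rate, fix $n$ and $\varphi\in\Sn$. For every $m\ge n$ both $B_n\varphi$ and $x_m$ lie in $B_n(\Sn)$, so $\norm{B_n\varphi-x_m}\le\delta_n$. Letting $m\to\infty$ and using continuity of the norm gives $\norm{B_n\varphi-\omega}\le\delta_n$. Taking the supremum over $\varphi$ yields the stated inequality for every $n$, and this tends to $0$. The same argument shows that $\omega$ does not depend on $\varphi_0$, since it lies in the closure of every $B_n(\Sn)$, whose diameters shrink to $0$.

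Note that the inequality holds for every $n$ whenever such an $\omega$ exists, not only under the assumption $\delta_n\to0$. Indeed, if $\sup_\varphi\norm{B_m\varphi-\omega}\to0$, then $\omega$ is a norm limit of points $B_m\varphi_0\in B_n(\Sn)$ for $m\ge n$, and the same estimate applies.

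I expect no real obstacle. The only point needing care is that $\omega$ is normal and is a state, which follows from norm closedness of $\Sn$ in the Banach space $\M_*$. Complete positivity is not needed beyond the fact that each $T_n$ maps normal states to normal states.
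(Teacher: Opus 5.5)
Your proposal is correct and follows the paper's route essentially verbatim. The paper first proves a general lemma (Theorem~\ref{thm:nested}): if images are nested and their diameters $d_n$ tend to $0$, then a trajectory $z_n=P_n\varphi_0$ is Cauchy, its norm limit $\omega$ lies in $\Sn$ by completeness and norm closedness, and passing to the limit gives $\sup_\varphi\norm{P_n\varphi-\omega}\le d_n$. It then applies this to the nested backward images~\eqref{eq:nesting}, exactly as you do; your triangle-inequality converse and your remark that the bound needs only one convergent trajectory are both fine.
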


\begin{theorem}[A finite-dimensionality characterization]\label{thm:intro-dimension}
For a nonzero $\sigma$-finite von Neumann algebra $\M$, the following are equivalent:
\begin{enumerate}
\item $\M$ is finite dimensional.
\item For every sequence of normal unital completely positive maps, pointwise loss of memory of the backward state products $B_n$ implies mixing to a normal state.
\item The implication in (2) holds for every such sequence for which $\bigcup_{n\geq1}B_n(\Sn)$ is norm relatively compact.
\end{enumerate}
If $\M$ is infinite dimensional, the counterexample can be chosen so that $B_n(\Sn)$ is the same compact line segment for every $n\geq1$, while the process has pointwise loss of memory and is not even Ces\`aro ergodic.
\end{theorem}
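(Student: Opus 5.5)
The plan is to prove (1)$\Rightarrow$(2)$\Rightarrow$(3)$\Rightarrow$(1). The middle implication is trivial, since (3) only restricts the class of sequences in (2).

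\textbf{(1)$\Rightarrow$(2).} Assume $\M$ is finite dimensional. Then $\Sn$ is norm compact, being a closed bounded subset of the finite-dimensional predual. I would first show that pointwise loss of memory, meaning $\norm{B_n\varphi-B_n\psi}\to0$ for each pair $\varphi,\psi$, upgrades to the uniform version. The maps $B_n$ are contractions of the predual, so the functions $(\varphi,\psi)\mapsto\norm{B_n\varphi-B_n\psi}$ are $2$-Lipschitz, uniformly in $n$, on the compact set $\Sn\times\Sn$. An equicontinuous sequence converging pointwise on a compact set converges uniformly; concretely, take a finite $\varepsilon$-net of $\Sn$ and pass from net points to arbitrary states. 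Theorem~\ref{thm:intro-uniform} then gives $\omega\in\Sn$ with $\sup_\varphi\norm{B_n\varphi-\omega}\to0$, which is mixing to a normal state.

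\textbf{(3)$\Rightarrow$(1), by contraposition.} Assume $\M$ is infinite dimensional and $\sigma$-finite. I would build the counterexample described at the end of the statement, using replacement-type channels.

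First, I choose two orthogonal normal states $\rho_0,\rho_1$ (disjoint supports $p,\one-p$) and an infinite sequence of normal states $\varphi_k$. These come from an infinite family of mutually orthogonal nonzero projections, which exists because $\M$ is infinite dimensional; $\sigma$-finiteness guarantees that faithful and normal states with prescribed supports exist.

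Second, I define measure-and-prepare channels
\[
 \Phi_n(x)=\varphi(x)\ldots
\]
of the form $T_n\psi=\psi(a_n)\rho_0+\psi(\one-a_n)\rho_1$, where $0\le a_n\le\one$ are elements of $\M$. These are normal UCP maps on $\M$ whose predual images lie in the segment $[\rho_0,\rho_1]$. The backward product is then $B_n\psi=T_0(T_1\cdots T_{n-1}\psi)$. Its image is contained in $[\rho_0,\rho_1]$, and it equals the whole segment as soon as $T_0$ has full image and the inner product is surjective enough onto states, which I arrange. The dynamics then reduces to a scalar recursion $t\mapsto\rho_0(a_k)t+\rho_1(a_k)(1-t)$ composed in backward order, with the innermost step depending on $\psi(a_{n-1})$.

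The main obstacle is choosing the $a_n$ so that two properties hold simultaneously. I need pointwise memory loss: for each fixed $\psi$, the dependence on $\psi$ through $\psi(a_{n-1})$ must wash out as $n\to\infty$. I also need $B_n\varphi$ to oscillate non-convergently, even in Ces\`aro mean.

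My first attempt would let $a_n$ be supported on projections $q_n$ that are eventually orthogonal to the support of any fixed normal state in the relevant sense, so that $\psi(q_n)\to0$ for every normal $\psi$. A weak$^*$-null sequence of projections achieves this, and such sequences exist exactly in infinite dimensions. This way the final input contributes asymptotically nothing, giving pointwise loss of memory. The loss is not uniform, because $\sup_\psi\psi(q_n)=1$, which is how the uniform conclusion of Theorem~\ref{thm:intro-uniform} is escaped.

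Meanwhile the outer coefficients $\rho_0(a_k),\rho_1(a_k)$ are chosen so that the common limit point $B_n\psi\approx t_n\rho_0+(1-t_n)\rho_1$ follows a scalar sequence $t_n$ with blocks of rapidly increasing length near $0$ and near $1$. Such a $t_n$ has no Ces\`aro limit, which kills Ces\`aro ergodicity. Relative compactness of $\bigcup_n B_n(\Sn)$ is automatic because everything lies in one compact segment.
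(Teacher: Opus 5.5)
Your proof of (1)$\Rightarrow$(2)$\Rightarrow$(3) is correct and matches the paper: compactness plus contractivity upgrades pointwise to uniform memory loss (Proposition~\ref{prop:hierarchy}), and Theorem~\ref{thm:intro-uniform} finishes. The gap is in (3)$\Rightarrow$(1). Measure-and-prepare channels with fixed preparation states $\rho_0,\rho_1$ cannot produce a counterexample for \emph{any} choice of the $a_n$, so this is not a matter of tuning coefficients.

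Put $S=\operatorname{conv}\{\rho_0,\rho_1\}$ and $\lambda_k=\rho_0(a_k)-\rho_1(a_k)$. Since $\rho_0(\one)=\rho_1(\one)=1$, one computes $T_k(\rho_0-\rho_1)=\lambda_k(\rho_0-\rho_1)$, so
\[
 B_n\rho_0-B_n\rho_1=\Lambda_n(\rho_0-\rho_1),\qquad \Lambda_n=\prod_{k=0}^{n-1}\lambda_k .
\]
Because $\rho_0,\rho_1$ are fixed normal states, pointwise loss of memory forces $\Lambda_n\to0$. But $T_{n-1}$ maps all of $\Sn$ into $S$, hence
\[
 B_n(\Sn)\subseteq T_0\cdots T_{n-2}(S),\qquad
 \diam\bigl(T_0\cdots T_{n-2}(S)\bigr)=|\Lambda_{n-1}|\,\norm{\rho_0-\rho_1}\longrightarrow0 .
\]
So memory loss is automatically uniform, and Theorem~\ref{thm:intro-uniform} gives uniform mixing.

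The weak$^*$-null $q_n$ in the innermost slot cannot help, because their effect is multiplied by the outer factor $\Lambda_{n-1}$, which must vanish. Indeed, if $a_n$ lives on weak$^*$-null projections, then even $\lambda_n\to0$ directly. Your oscillating parameter $t_n$ fails for the same reason. Backward images are nested subsegments of $S$, and alternating between neighbourhoods of $\rho_0$ and $\rho_1$ would keep their length near $\norm{\rho_0-\rho_1}$, i.e.\ $|\Lambda_{n-1}|$ near $1$ for arbitrarily large $n$. In particular, the extra claim $B_n(\Sn)=S$ for all $n$, which you say you ``arrange'', would need $|\Lambda_{n-1}|=1$ for every $n$.

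More generally, any scheme fails in which infinitely many $T_k$ have range in one fixed norm-compact set $K$. For such $n$ one has $B_n(\Sn)\subseteq B_{n-1}(K)$, and your own equicontinuity argument makes memory loss uniform on $K$.

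The missing idea is that only the \emph{products} should have a fixed compact image; the individual inner channels must prepare states that escape to infinity. The paper does this classically on $\ell^1(\N_0)$ in Theorem~\ref{thm:reservoir}, using deterministic relabelings. There $Q_{n+1}$ maps into $\operatorname{conv}\{e_{2n+2},e_{2n+3}\}$, two tail coordinates on which $r_n$ still takes both values $0,1$. The maps $s_n$ are chosen so that $r_n=s_1\circ\cdots\circ s_n$ sends every $j\leq n$ to $a_n$ and every $j>n$ to $j\bmod2$. This gives the following:
\begin{enumerate}
\item $R_n(\mathcal P(\N_0))=\operatorname{conv}\{e_0,e_1\}$ for every $n$, because tail point masses still reach both endpoints; in particular $\delta(R_n)=1$.
\item A fixed state loses memory, because its mass eventually lies in the head $j\leq n$.
\item $R_ne_0=e_{a_n}$ follows the prescribed $0$--$1$ sequence, so taking $(a_n)$ constant on doubly exponential blocks destroys Ces\`aro convergence.
\end{enumerate}
The example is then transported to $\M$ by the retract $\widehat T_n=E_*Q_nJ_*$ of Lemma~\ref{lem:retract}. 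That is where the infinite orthogonal family of projections and a faithful normal state are used.
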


Theorem~\ref{thm:intro-uniform} is a dimension-free positive result. Theorem~\ref{thm:intro-dimension} shows that uniformity cannot be replaced by pointwise convergence: the matrix theorem of \cite{Sou26} holds pointwise only because the state space of a matrix algebra is compact.

The third main result applies the backward theory to spin chains with an infinite-dimensional bond algebra. Let $\mathcal B$ be a von Neumann algebra, let $d\geq1$, and let
\[
 \mathcal E_k:M_d(\mathbb C)\otimes\mathcal B\longrightarrow\mathcal B,\qquad k\geq1,
\]
be normal unital completely positive maps. Put $\Phi_k(b)=\mathcal E_k(\one\otimes b)$ and $T_k=(\Phi_k)_*$. The maps $\mathcal E_k$ define, for every normal boundary state $\sigma$ at site $N$, a finite-volume state $\varphi_N^\sigma$ on $\mathcal A_{[1,N]}=M_d(\mathbb C)^{\otimes N}$; see Section~\ref{sec:FCS}.

\begin{theorem}[Inhomogeneous transfer states]\label{thm:intro-FCS}
Assume that $\delta(T_m\cdots T_{n-1})\to0$ as $n\to\infty$ for every $m\geq1$. Then there is a unique state $\varphi_\infty$ on the quasi-local algebra $\mathcal A=\overline{\bigcup_N\mathcal A_{[1,N]}}^{\norm\cdot}$ such that, for all $m<N$ and all normal states $\sigma$ of $\mathcal B$,
\[
 \bigl\|\varphi_N^\sigma|_{\mathcal A_{[1,m]}}-\varphi_\infty|_{\mathcal A_{[1,m]}}\bigr\|
 \leq2\,\delta(T_{m+1}\cdots T_N).
\]
Its local restrictions are expressed through the unique normal entrance law of the backward products. For local observables $x$ on $[1,m]$ and $z$ on $[m+\ell+1,m+\ell+r]$,
\[
 \bigl|\varphi_\infty(x\otimes\one_\ell\otimes z)-\varphi_\infty(x)\varphi_\infty(z)\bigr|
 \leq4\,\delta(T_{m+1}\cdots T_{m+\ell})\,\norm x\norm z.
\]
\end{theorem}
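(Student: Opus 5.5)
The plan is to build $\varphi_\infty$ from the backward products by a pullback argument. First I fix the conventions of Section~\ref{sec:FCS}, which I take to be as follows. For $x_1,\dots,x_N\in M_d(\mathbb C)$ and a normal boundary state $\sigma$,
\[
 \varphi_N^\sigma(x_1\otimes\cdots\otimes x_N)=\sigma\bigl(\mathcal E_N(x_N\otimes\cdots)\bigr),
\]
read appropriately as the nested expression
\[
 \varphi_N^\sigma(x)=\varphi_0\bigl(\mathcal E_1(x_1\otimes\mathcal E_2(x_2\otimes\cdots\mathcal E_N(x_N\otimes\one)\cdots))\bigr),
\]
with the boundary state entering at the right end. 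Concretely, I write the finite-volume state as $\sigma\circ\mathcal E_{N}^{\,\cdot}\circ\cdots$ on the dual side. For a local $x\in\mathcal A_{[1,m]}$ there is a normal map $\Theta_m(x)\in\mathcal B$-valued "left part" and an initial normal state $\rho$, so that
\[
 \varphi_N^\sigma(x)=\bigl(\Lambda_x\bigr)\bigl(\Phi_{m+1}\cdots\Phi_N(\one)\ \text{with}\ \sigma\bigr)=\Psi_{m,x}\bigl(T_{m+1}\cdots T_N\sigma\bigr).
\]
Here $\Psi_{m,x}$ is a bounded linear functional on the predual of $\mathcal B$, namely $\tau\mapsto\rho(\mathcal E_{[1,m]}(x\otimes\cdot))$ evaluated against $\tau$. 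It satisfies $|\Psi_{m,x}(\tau)|\le\norm x\norm\tau$ by complete positivity and unitality. The key identity is therefore
\[
 \varphi_N^\sigma|_{\mathcal A_{[1,m]}}=\Psi_m\bigl(T_{m+1}\cdots T_N\sigma\bigr),
\]
where $\Psi_m$ maps normal states of $\mathcal B$ to states of $\mathcal A_{[1,m]}$ contractively in norm. Establishing this factorization from the definition is the first step.

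Second, I construct the entrance law. For fixed $m$, the sets $K_{m,N}=T_{m+1}\cdots T_N(\Sn)$ are nested decreasing in $N$, because $T_{m+1}\cdots T_{N+1}=(T_{m+1}\cdots T_N)T_{N+1}$. Their diameters are $\le 2\delta(T_{m+1}\cdots T_N)\to0$, with the normalisation $\diam=2\delta$. The predual is a Banach space and $\Sn$ is norm closed, so the closures of $K_{m,N}$ shrink to a single normal state $\omega_m$; this is exactly the mechanism of Theorem~\ref{thm:intro-uniform}, applied to the shifted sequence. Moreover $\sup_\sigma\norm{T_{m+1}\cdots T_N\sigma-\omega_m}\le\diam K_{m,N}$, since $\omega_m$ lies in the closure of $K_{m,N}$. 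Compatibility $T_{m+1}\omega_{m+1}=\omega_m$ follows by applying the contraction $T_{m+1}$ to $K_{m+1,N}\subset\Sn$ and using uniqueness. This makes $(\omega_m)$ a normal entrance law, and it is unique: any entrance law lies in every $K_{m,N}$.

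Third, I define $\varphi_\infty$ on $\mathcal A_{[1,m]}$ by $\Psi_m(\omega_m)$. The compatibility $\Psi_{m+1}(\tau)|_{\mathcal A_{[1,m]}}=\Psi_m(T_{m+1}\tau)$, which follows from unitality $\mathcal E_{m+1}(\one\otimes b)=\Phi_{m+1}(b)$, together with $T_{m+1}\omega_{m+1}=\omega_m$, makes these restrictions consistent. They therefore extend to a state on the quasi-local algebra $\mathcal A$ by norm density. The boundary estimate is then
\[
 \norm{\Psi_m(T_{m+1}\cdots T_N\sigma)-\Psi_m(\omega_m)}\le\norm{T_{m+1}\cdots T_N\sigma-\omega_m}\le\diam K_{m,N}\le2\delta(T_{m+1}\cdots T_N).
\]
Uniqueness of $\varphi_\infty$ is immediate from this bound as $N\to\infty$.

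Fourth, I prove the correlation bound. Write
\[
 \varphi_\infty(x\otimes\one_\ell\otimes z)=\Psi_{m,x}\bigl(T_{m+1}\cdots T_{m+\ell}\,\eta_z\bigr),
\]
where $\eta_z$ is the normal functional $b\mapsto\omega_{m+\ell+r}\bigl(\mathcal E_{[m+\ell+1,m+\ell+r]}(z\otimes b)\bigr)$, and note that $\varphi_\infty(z)=\eta_z(\one)$. Decompose $z$ into four positive parts, each of norm $\le\norm z$. For positive $z$, $\eta_z$ is a positive multiple $\eta_z(\one)\,\tau_z$ of a normal state $\tau_z$. Then
\[
 \varphi_\infty(x\otimes\one_\ell\otimes z)-\varphi_\infty(x)\varphi_\infty(z)=\eta_z(\one)\,\Psi_{m,x}\bigl(T_{m+1}\cdots T_{m+\ell}\tau_z-\omega_m\bigr).
\]
In absolute value this is bounded by $\norm z\norm x\cdot2\delta$, using $\varphi_\infty(x)=\Psi_{m,x}(\omega_m)$ and $\omega_m=T_{m+1}\cdots T_{m+\ell}\,\omega_{m+\ell}$. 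The real part and the Hermitian decomposition cost a factor, which I expect to bring the constant to $4$. A cleaner route writes $z=z_1+iz_2$ with self-adjoint parts and uses that for self-adjoint $z$, $\eta_z=\eta_z^+-\eta_z^-$ with total mass at most $\norm z$, via the Jordan decomposition in the predual; each piece contributes $\le2\delta\norm x\norm z$.

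I expect the main obstacle to be the bookkeeping in the factorization step and this constant-$4$ estimate. In particular, the signed functional $\eta_z-\eta_z(\one)\omega_{m+\ell}$ has zero total mass. Bounding its image under $T_{m+1}\cdots T_{m+\ell}$ by $\delta\cdot\norm{\eta_z-\eta_z(\one)\omega_{m+\ell}}\le\delta\cdot2\norm z$ is the standard fact that $\delta$ controls the norm on trace-zero normal hermitian functionals. The remaining factor $2$ comes from the complex splitting of $z$.
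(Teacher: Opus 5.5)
Your skeleton is the paper's proof: the backward entrance law from nested images of shrinking diameter (Theorem~\ref{thm:entrance}, applied from every starting time), and the definition of $\varphi_\infty$ on $\mathcal A_{[1,m]}$ through the entrance state. Your $\Psi_m(\omega_m)$ is the paper's $\omega_{m+1}(y_x)$ after an index shift. Consistency comes from $y_{x\otimes\one}=\Phi_{m+1}(y_x)$. For clustering with self-adjoint $z$, you use the zero-mass functional $\eta_z-\eta_z(\one)\,\omega_{m+\ell}\in E_0$, of norm at most $2\norm z$, and then split $z=z_1+iz_2$. Your second and third steps are correct as written. In the fourth step, your four-positive-parts route, with each mass $\eta_{z_j^\pm}(\one)$ bounded by $\norm z$, gives the constant $8$, not $4$. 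You get $4$ only by using $\eta_{z_j^+}(\one)+\eta_{z_j^-}(\one)=\eta_{|z_j|}(\one)\leq\norm{z_j}$, which amounts to your Hermitian route.

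The first step is not in order, and it is the only place where the structure of the transfer maps enters. Your nested expression $\varphi_0\bigl(\mathcal E_1(x_1\otimes\mathcal E_2(x_2\otimes\cdots\mathcal E_N(x_N\otimes\one)\cdots))\bigr)$ makes site $1$ outermost and puts a state at the left end. This is the opposite of the recursion~\eqref{eq:block-transfer-right}. In that convention, unitality collapses $\mathcal E_{m+1}(\one\otimes\cdots\mathcal E_N(\one\otimes\one)\cdots)$ to $\one$. The restriction to $[1,m]$ would then depend neither on $N$ nor on any boundary, and no backward product would appear. There are three further problems in that paragraph:
\begin{itemize}
\item There is no initial state $\rho$ in this model; the left end is closed by $\one\in\mathcal B$.
\item The recipe $\tau\mapsto\rho(\mathcal E_{[1,m]}(x\otimes\cdot))$ ``evaluated against $\tau$'' does not define a functional on $\mathcal B_*$.
\item $\Phi_{m+1}\cdots\Phi_N(\one)=\one$ carries no information.
\end{itemize}
The correct object is $\Psi_{m,x}(\tau)=\tau(y_x)$ with $y_x=\mathcal E_{[1,m]}(x\otimes\one)$, and the identity to be proved is
\[
 \mathcal E_{[1,N]}(x\otimes\one\otimes\one)=\Phi_N\circ\cdots\circ\Phi_{m+1}(y_x).
\]
Its predual is $T_{m+1}\cdots T_N=B_{m+1,N+1}$. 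The order you wrote, $\Phi_{m+1}\circ\cdots\circ\Phi_N$, would have the forward product $T_N\cdots T_{m+1}$ as predual. This composition order is exactly why the backward product, not the forward one, controls the limit.

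The two-block identity $\varphi_\infty(x\otimes\one_\ell\otimes z)=(T_{m+1}\cdots T_{m+\ell}\eta_z)(y_x)$, which you use in the clustering step, is also asserted without proof. Both identities follow from the block factorisation of Lemma~\ref{lem:local-restriction}:
\[
 \mathcal E_{[p,q]}(x\otimes w\otimes b)=\mathcal E^w_{[s+1,q]}(\mathcal E_{[p,s]}(x\otimes b)),\qquad \mathcal E^{\one}_{[s+1,q]}=\Phi_q\circ\cdots\circ\Phi_{s+1}.
\]
These are proved by induction on $q$ from the recursion. With that lemma replacing your first paragraph, your argument is complete and coincides with the paper's.
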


This theorem extends the boundary-state representation of \cite{Sou26} and the finite-dimensional replacement estimates of \cite{P26} to an arbitrary bond algebra. The estimate is uniform in the boundary state, and it cannot be obtained from pointwise memory loss (Remark~\ref{rem:FCS-pointwise}).

Further results identify the precise role of each hypothesis. An attained normal minorization coefficient replaces the rank-one construction of \cite{Sou26} and controls $\delta$ in any von Neumann algebra (Section~\ref{sec:minorization}). Unique normal entrance laws and forward tracking estimates are proved in Sections~\ref{sec:direction} and~\ref{sec:rates}. Explicit counterexamples separate pointwise from uniform convergence, Ces\`aro convergence from mixing, convergence from a fixed starting time from uniformity over all starting times, and a single strict contraction of a prefix from recurrent contraction (Sections~\ref{sec:dimension} and~\ref{sec:homogeneous}). A Hilbert-space criterion for processes that preserve a faithful normal state appears in Section~\ref{sec:L2}.


Section~\ref{sec:setting} fixes notation and composition orders, and Section~\ref{sec:hierarchy} states the hierarchy of convergence notions. Sections~\ref{sec:delta} and~\ref{sec:minorization} treat the contraction and minorization coefficients. Theorem~\ref{thm:intro-uniform} is proved in Section~\ref{sec:direction}, and quantitative estimates follow in Section~\ref{sec:rates}. Theorem~\ref{thm:intro-dimension} and the universal counterexamples are in Section~\ref{sec:dimension}, with homogeneous comparisons in Section~\ref{sec:homogeneous}. Section~\ref{sec:L2} gives the Hilbert-space criterion, Section~\ref{sec:FCS} proves Theorem~\ref{thm:intro-FCS}, and Section~\ref{sec:conclusion} summarises the conclusions.

\section{Preduals, channels, and composition order}\label{sec:setting}

Let $\M$ be a nonzero von Neumann algebra, with unit $\one$, predual $\M_*$, self-adjoint part $\M_{*,\mathrm{sa}}$, and positive cone $\M_{*,+}$. The \emph{normal state space} of $\M$ is
\[
 \Sn=\{\varphi\in\M_{*,+}:\varphi(\one)=1\},
\]
that is, the set of states of $\M$ that are normal (equivalently, $\sigma$-weakly continuous, or completely additive on orthogonal families of projections). The subscript $\mathrm n$ abbreviates ``normal''. When $\M=B(H)$, normal states are exactly the functionals $x\mapsto\Tr(\rho x)$ with $\rho$ a density operator, so $\Sn$ is the von Neumann algebra counterpart of the set $\mathcal S(H)$ of density operators. Also write
\[
 E_0=\{f\in\M_{*,\mathrm{sa}}:f(\one)=0\}.
\]
All norms on functionals are predual norms. In particular, $\norm{\varphi}=\varphi(\one)$ for $\varphi\geq0$. When $\M$ is $\sigma$-finite, a faithful normal state may be denoted by $\chi$; no trace is assumed. The general positive results below do not require $\sigma$-finiteness. That assumption is convenient for the universal counterexamples.

A channel on observables is a normal unital completely positive map $\Phi:\M\to\M$. Its predual $T=\Phi_*$ is defined by $(Tf)(x)=f(\Phi(x))$. It is a contraction and preserves $\Sn$. In a general von Neumann algebra the intrinsic normalization statement is
\[
 (Tf)(\one)=f(\one),
\]
not preservation of an unspecified trace. Density operators and a trace become available only after an appropriate additional representation or semifinite hypothesis.

Fix a sequence $(\Phi_n)_{n\geq0}$ and put $T_n=(\Phi_n)_*$. For $m<n$ define
\begin{equation}\label{eq:FB}
 F_{m,n}=T_{n-1}\cdots T_m,\qquad
 B_{m,n}=T_m\cdots T_{n-1},\qquad F_{m,m}=B_{m,m}=\id.
\end{equation}
Thus $F_{m,n}$ is the chronological state evolution, and $B_{m,n}$ is the reverse-ordered state product. The terms forward and backward will always mean these two state-space orders. Their observable duals are, respectively,
\begin{equation}\label{eq:duality-orders}
 F_{m,n}^*=\Phi_m\cdots\Phi_{n-1},\qquad
 B_{m,n}^*=\Phi_{n-1}\cdots\Phi_m.
\end{equation}
Here the superscript $*$ denotes the Banach adjoint, with $\M=(\M_*)^*$.

The reversal is essential: $(\Phi\circ\Psi)_*=\Psi_*\circ\Phi_*$. In particular, the observable product $\Phi_{n-1}\cdots\Phi_0$, which is chronological in the Heisenberg picture, has the backward state product $B_{0,n}$ as its predual. Formula~\eqref{eq:duality-orders} removes any ambiguity in the terms forward and backward. Neither product is an inverse dynamics, a Petz reversal, or a modular adjoint. Such notions would require additional data and are not used here.

The composition identities are
\[
 F_{m,n}=F_{k,n}F_{m,k},\qquad
 B_{m,n}=B_{m,k}B_{k,n}\quad(m\leq k\leq n).
\]
For backward products the state images satisfy
\begin{equation}\label{eq:nesting}
 B_{m,n+1}(\Sn)\subseteq B_{m,n}(\Sn).
\end{equation}
For forward products one instead has
$F_{m,n+1}=T_nF_{m,n}$, which need not give nested images. This is an order-of-composition distinction; it occurs already for classical noncommuting stochastic matrices and is not exclusively quantum.

\section{Convergence notions and the correct hierarchy}\label{sec:hierarchy}

Fix $m$ and one of the families $P_n=F_{m,n}$ or $P_n=B_{m,n}$, and write
\[
 A_N=\frac1N\sum_{j=0}^{N-1}P_{m+j},\qquad P_m=\id.
\]
The initial index will be suppressed when $m=0$.

\begin{definition}\label{def:hierarchy}
The family $(P_n)$ is \emph{mixing} if there exists $\omega\in\Sn$ with $\norm{P_n\varphi-\omega}\to0$ for every $\varphi\in\Sn$. It is \emph{uniformly mixing} if this convergence is uniform over $\Sn$, and \emph{exponentially mixing} if, for constants $C<\infty$ and $0<r<1$,
\[
 \sup_{\varphi\in\Sn}\norm{P_n\varphi-\omega}\leq Cr^{n-m}.
\]
It is \emph{mean ergodic to one state} if $\norm{A_N\varphi-\omega}\to0$ for all normal states, and \emph{uniformly mean ergodic to one state} if the convergence is uniform.

It has \emph{pointwise loss of memory} if
\[
 \norm{P_n\varphi-P_n\psi}\longrightarrow0\quad(\varphi,\psi\in\Sn),
\]
and \emph{uniform loss of memory} if
$d_n:=\sup_{\varphi,\psi\in\Sn}\norm{P_n\varphi-P_n\psi}\to0$.
The exponential version is $d_n\leq Cr^{n-m}$. Mean loss of memory and uniform mean loss of memory are defined by replacing $P_n$ by $A_N$ in these two conditions.
\end{definition}

The phrases ``mean ergodic'' and ``ergodic'' below refer to convergence to a single state, not merely existence of an arbitrary mean-ergodic projection. The term ``weak mixing'' in \cite{Sou26} corresponds to the norm loss-of-memory property just defined. It does not mean convergence in a weak topology, and is not the usual ergodic-theoretic weak mixing defined by Ces\`aro decay of absolute correlations. Keeping these notions separate prevents an otherwise substantial terminological ambiguity.

\begin{proposition}[Hierarchy]\label{prop:hierarchy}
For either composition order,
\[
 \text{exponential mixing}\Longrightarrow\text{uniform mixing}
 \Longrightarrow\text{mixing}\Longrightarrow\text{mean ergodicity},
\]
and
\[
 \text{uniform mixing}\Longrightarrow\text{uniform mean ergodicity}
 \Longrightarrow\text{mean ergodicity}.
\]
The analogous implications hold for loss of memory. Each convergence-to-one-state property implies its corresponding loss-of-memory property. In finite dimension, pointwise and uniform versions coincide, both for products and for averages. In infinite dimension, mixing need not imply uniform mean ergodicity.
\end{proposition}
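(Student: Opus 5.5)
The chain of implications is mostly a matter of unwinding Definition~\ref{def:hierarchy}; only the finite-dimensional equivalence and the infinite-dimensional example need real work.

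\textbf{Elementary implications.} Exponential mixing gives uniform mixing because $Cr^{n-m}\to0$. Uniform mixing gives mixing trivially.

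\textbf{Mixing to mean ergodicity.} Suppose $\norm{P_n\varphi-\omega}\to0$. Then
\[
\norm{A_N\varphi-\omega}\le \frac1N\sum_{j<N}\norm{P_{m+j}\varphi-\omega}\to0,
\]
because Ces\`aro means of a null sequence are null. The same inequality with suprema gives uniform mixing $\Rightarrow$ uniform mean ergodicity, since
\[
\sup_\varphi\norm{A_N\varphi-\omega}\le\frac1N\sum_j\sup_\varphi\norm{P_{m+j}\varphi-\omega}.
\]
The implication uniform mean ergodicity $\Rightarrow$ mean ergodicity is trivial.

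\textbf{Loss-of-memory analogues.} These follow verbatim, using $\norm{A_N\varphi-A_N\psi}\le\frac1N\sum_j\norm{P_{m+j}\varphi-P_{m+j}\psi}$. Convergence to one state implies loss of memory by the triangle inequality, $\norm{P_n\varphi-P_n\psi}\le\norm{P_n\varphi-\omega}+\norm{P_n\psi-\omega}$, uniformly when the first convergence is uniform. The same argument applies to the averages $A_N$.

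\textbf{Finite dimension.} Here $\M_*$ is finite dimensional, so $\Sn$ is norm compact. I would argue as follows.
\begin{enumerate}
\item The maps $P_n$ and $A_N$ are contractions, so the family $\{\varphi\mapsto P_n\varphi\}$ is equicontinuous (1-Lipschitz) on the compact set $\Sn$.
\item Pointwise convergence of an equicontinuous sequence on a compact set is uniform. This is the standard $\varepsilon/3$ argument with a finite $\varepsilon$-net $\varphi_1,\dots,\varphi_k$: $\norm{P_n\varphi-\omega}\le\norm{\varphi-\varphi_i}+\norm{P_n\varphi_i-\omega}$.
\item For loss of memory, apply the same net argument to pairs, using compactness of $\Sn\times\Sn$ and $\norm{P_n\varphi-P_n\psi}\le\norm{\varphi-\varphi_i}+\norm{\psi-\psi_j}+\norm{P_n\varphi_i-P_n\psi_j}$.
\item The averages $A_N$ are also contractions, so the identical reasoning covers them.
\end{enumerate}

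\textbf{Infinite dimension: mixing without uniform mean ergodicity.} This is the main obstacle, since an explicit construction is required. My plan is as follows.
\begin{enumerate}
\item $\M$ is infinite dimensional, so I pick a sequence of mutually orthogonal nonzero projections $p_k$ with $\sum p_k=\one$. I choose normal states $\varphi_k$ supported on $p_k$ (assuming the algebra is $\sigma$-finite, or restricting to a $\sigma$-finite corner), and a reference normal state $\omega$ (for instance $\varphi_0$).
\item I use the homogeneous channel $\Phi(x)=\sum_k \varphi_{k-1}(x)\,p_k$ for $k\ge1$ and $\varphi_0(x)p_0$ on $p_0$. This is a "shift with delays": state mass on block $k$ moves to $\varphi_{k-1}$ and is finally absorbed at $\varphi_0$. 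Alternatively, I use a countdown that needs $k$ steps from block $k$. Then every normal state converges in norm to $\varphi_0$, because the tail mass $\varphi(\sum_{k>n}p_k)\to0$; this is where normality is used.
\item For a start in block $N$, however, $P_j\varphi_N$ is orthogonal to $\varphi_0$ for $j<N$, so $\norm{A_N\varphi_N-\varphi_0}$ stays bounded below, near $2$.
\item Uniform mean ergodicity therefore fails. I would check that $\Phi$ is a normal UCP map (a sum of normal states times orthogonal projections summing to $\one$), and verify the norm convergence via a tail estimate.
\end{enumerate}
The same example works for both orders, since $\Phi$ is homogeneous.
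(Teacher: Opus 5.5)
Your proposal is correct and follows essentially the same route as the paper: Ces\`aro and triangle-inequality estimates for the implications, and compactness of $\Sn$ plus contractivity in finite dimension (your $\varepsilon$-net argument is equivalent to the paper's subsequence contradiction). For the infinite-dimensional failure, your channel $\Phi(x)=\varphi_0(x)p_0+\sum_{k\geq1}\varphi_{k-1}(x)p_k$ is exactly the countdown operator $C$ of Example~\ref{ex:countdown} lifted through the retraction of Lemma~\ref{lem:retract}, and indeed $\norm{A_N\varphi_N-\varphi_0}=2$ exactly. Your $\sigma$-finiteness hedge is harmless but unnecessary, since vector states supported on each $p_k$ exist in any von Neumann algebra; the existence of the partition $(p_k)$ in an arbitrary infinite-dimensional algebra, which you take for granted, is proved in the first part of that lemma, and for the ``need not'' claim a single algebra such as $\ell^\infty(\N_0)$ would already suffice.
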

\begin{proof}
Convergence to a common state gives loss of memory by the triangle inequality. Ces\`aro summation preserves pointwise norm convergence, and also preserves convergence of the relevant suprema when these converge to zero. To prove the finite-dimensional assertion, use that all channels and their averages are contractions. If $P_n\varphi\to\omega$ pointwise but not uniformly, choose $\varphi_n$ violating uniform convergence and pass to a norm-convergent subsequence. Contractivity gives
\[
 \norm{P_n\varphi_n-\omega}
 \leq\norm{\varphi_n-\varphi}+\norm{P_n\varphi-\omega}\longrightarrow0,
\]
a contradiction. Apply the same argument to pairs of states and to averages. The final assertion follows from Example~\ref{ex:countdown}.
\end{proof}

Thus the correct infinite-dimensional diagram branches: mixing implies mean ergodicity, but there is no general arrow from mixing to uniform mean ergodicity. In finite dimension the appropriate chain is
\begin{multline*}
 \text{exponential mixing}\Longrightarrow
 \text{uniform mixing}\Longleftrightarrow\text{mixing}\\
 \Longrightarrow\text{uniform mean ergodicity}\Longleftrightarrow
 \text{mean ergodicity}.
\end{multline*}
The two remaining reverse arrows fail for inhomogeneous processes; explicit examples appear below. For a homogeneous process, uniform mixing always has an exponential rate; see Theorem~\ref{thm:homogeneous}.

\section{The predual contraction coefficient}\label{sec:delta}

For a channel predual $T$ define
\begin{equation}\label{eq:delta}
 \delta(T)=\norm{T|_{E_0}}
 =\sup_{0\ne f\in E_0}\frac{\norm{Tf}}{\norm f},
\end{equation}
with $\delta(T)=0$ if $E_0=\{0\}$. This is the usual predual Dobrushin coefficient, as in \cite{Muk13,Muk15}. The following elementary facts are included to make all subsequent bounds self-contained.

\begin{lemma}\label{lem:delta}
For channel preduals $T,S$,
\begin{equation}\label{eq:diameter}
 0\leq\delta(T)\leq1,\qquad
 \delta(TS)\leq\delta(T)\delta(S),\qquad
 2\delta(T)=\sup_{\varphi,\psi\in\Sn}\norm{T\varphi-T\psi}.
\end{equation}
For $x=x^*\in\M$ put
$\osc(x)=\sup\operatorname{spec}(x)-\inf\operatorname{spec}(x)$.
Then
\begin{equation}\label{eq:oscillation}
 \delta(T)=\sup_{x=x^*,\,\osc(x)>0}
       \frac{\osc(T^*x)}{\osc(x)}.
\end{equation}
\end{lemma}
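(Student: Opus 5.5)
The plan is to prove the four assertions in the order bounds, diameter formula, submultiplicativity, oscillation formula, since the diameter formula is the main tool for the others.

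\emph{Bounds.} Since $T$ is the predual of a unital positive map, it is a contraction on $\M_*$. It also maps $E_0$ into $E_0$: $T$ preserves self-adjointness, and $(Tf)(\one)=f(\one)=0$. This gives $0\leq\delta(T)\leq1$.

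\emph{Diameter formula.} The key tool is the Jordan decomposition in the predual. Every $f\in\M_{*,\mathrm{sa}}$ can be written $f=f_+-f_-$ with $f_\pm\in\M_{*,+}$ and $\norm f=f_+(\one)+f_-(\one)$. If $f\in E_0$ and $f\neq0$, then $f_+(\one)=f_-(\one)=\norm f/2$. Hence $f=\tfrac{\norm f}{2}(\varphi-\psi)$ with $\varphi,\psi\in\Sn$, and so
\[
\frac{\norm{Tf}}{\norm f}=\tfrac12\norm{T\varphi-T\psi}.
\]
Conversely, for any $\varphi\neq\psi$ in $\Sn$, the functional $f=\varphi-\psi$ lies in $E_0$ and satisfies $\norm f\leq2$. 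This gives
\[
\norm{T\varphi-T\psi}\leq\delta(T)\norm f\leq2\delta(T).
\]
Taking suprema in both directions yields $2\delta(T)=\sup_{\varphi,\psi}\norm{T\varphi-T\psi}$. The degenerate case $E_0=\{0\}$ occurs exactly when $\Sn$ is a singleton, i.e.\ $\M=\mathbb C$, and there both sides vanish.

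\emph{Submultiplicativity.} This follows at once from $S(E_0)\subseteq E_0$, since then $\norm{TS|_{E_0}}\leq\norm{T|_{E_0}}\norm{S|_{E_0}}$.

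\emph{Oscillation formula.} This is the expected main obstacle, because it is a duality statement: $E_0$ is a subspace of $\M_{*,\mathrm{sa}}$, and its dual must be identified with $\M_{\mathrm{sa}}/\R\one$ carrying the quotient norm. The first step is to show that for $x=x^*$,
\[
\operatorname{dist}(x,\R\one)=\tfrac12\osc(x),
\]
which is a spectral computation with the optimal constant $c=(\sup\operatorname{spec}x+\inf\operatorname{spec}x)/2$. The second step is
\[
\sup_{f\in E_0,\ \norm f\leq1}|f(x)|=\tfrac12\osc(x).
\]
For the upper bound ($\leq$), write $f=\tfrac t2(\varphi-\psi)$ with $t\leq1$ and use that $\varphi(x),\psi(x)\in[\inf\operatorname{spec}x,\sup\operatorname{spec}x]$. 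The reverse inequality ($\geq$) needs more care, because normal states need not attain the spectral extremes. Here I would use spectral projections $p=\one_{[M-\epsilon,M]}(x)$ and $q=\one_{[m,m+\epsilon]}(x)$, which are nonzero, and pick normal states $\varphi,\psi$ supported under $p$ and $q$ respectively (a nonzero projection supports a normal state, e.g.\ a vector state in a normal representation). Then $\varphi(x)-\psi(x)\geq\osc(x)-2\epsilon$.

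With the second step in hand, for $f=\tfrac{\norm f}{2}(\varphi-\psi)\in E_0$ we get
\[
|(Tf)(x)|=|f(T^*x)|\leq\tfrac12\norm f\,\osc(T^*x).
\]
The reverse direction uses the Hahn--Banach and Jordan description of the predual norm: $\norm g=\sup_{x=x^*,\ \norm x\leq1}|g(x)|$ for $g$ self-adjoint. Combining this with $g=Tf\in E_0$, which makes $g$ insensitive to shifts by $\R\one$, gives
\[
\norm{Tf}=\sup_{x=x^*}\frac{2|g(x)|}{\osc(x)}.
\]
Substituting $g(x)=f(T^*x)$ and exchanging the two suprema gives $\delta(T)=\sup_x\osc(T^*x)/\osc(x)$. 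Throughout, one must check that $T^*x$ is self-adjoint, which holds because $\Phi$ is positive.
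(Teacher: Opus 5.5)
Your proof is correct and follows the paper's route. The first three assertions are obtained exactly as in the paper: the Jordan decomposition $f=\tfrac{\norm{f}}{2}(\varphi-\psi)$ on $E_0$, contractivity, and $S(E_0)\subseteq E_0$. The oscillation formula is the same duality between $E_0$ and $\M_{\mathrm{sa}}/\R\one$ with quotient norm $\tfrac12\osc$. The paper appeals abstractly to $E_0^*\cong\M_{\mathrm{sa}}/\R\one$ and $\norm{T|_{E_0}}=\norm{(T|_{E_0})^*}$. You instead verify the two underlying norming facts by hand: the Jordan decomposition on one side, and spectral projections with normal states supported under them on the other, in place of Hahn--Banach. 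This is a valid and slightly more self-contained execution of the same argument.
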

\begin{proof}
A self-adjoint normal functional $f$ has a normal Jordan decomposition $f=f_+-f_-$ with $\norm f=f_+(\one)+f_-(\one)$. If $f\in E_0$, then $f_+(\one)=f_-(\one)=\norm f/2$. For $f\ne0$, therefore,
\[
 f=\frac{\norm f}{2}(\varphi-\psi)
 \quad\text{for some }\varphi,\psi\in\Sn.
\]
This gives $\delta(T)\leq\frac12\sup\norm{T\varphi-T\psi}$. Conversely,
$\norm{T\varphi-T\psi}\leq\delta(T)\norm{\varphi-\psi}\leq2\delta(T)$.
The coefficient is at most one by contractivity; submultiplicativity follows because $S(E_0)\subseteq E_0$.

For the last assertion, the real Banach dual of $E_0$ is $\M_{\mathrm{sa}}/\R\one$, with quotient norm
\[
 \inf_{c\in\R}\norm{x-c\one}=\tfrac12\osc(x).
\]
Indeed the annihilator of $E_0$ is $\R\one$, and centering the spectrum gives the displayed equality. The adjoint of $T|_{E_0}$ is the induced map of this quotient. Equality of the norms of an operator and its adjoint proves~\eqref{eq:oscillation}.
\end{proof}

All operator norms in statements using $E_0$ are real operator norms. Uniform convergence over states is equivalent to operator norm convergence on $\M_{*,\mathrm{sa}}$ to the rank-one map
\[
 R_\omega f=f(\one)\omega.
\]
In fact the operator norm of $T-R_\omega$ on this real space equals its supremum on states, by Jordan decomposition. On the complex predual these two norms are equivalent with a factor at most two, which is sufficient for convergence but should not be silently replaced by an exact identity.

\section{An attained normal minorization coefficient}\label{sec:minorization}

\begin{definition}
For a channel predual $T$ let
\[
 \mathcal K(T)=\{\kappa\in\M_{*,+}:\kappa\leq T\varphi
                        \text{ for every }\varphi\in\Sn\},\qquad
 \beta(T)=\sup_{\kappa\in\mathcal K(T)}\kappa(\one).
\]
The order is the ordinary order of normal positive functionals. Thus $\kappa\leq\lambda$ means $\kappa(x)\leq\lambda(x)$ for all $x\in\M_+$.
\end{definition}

\begin{theorem}[Normal common-lower-bound theorem]\label{thm:minorization}
For every von Neumann algebra and every normal channel predual $T$, the maximum defining $\beta(T)$ is attained. Moreover,
\begin{equation}\label{eq:beta}
 0\leq\beta(T)\leq1,\qquad \delta(T)\leq1-\beta(T).
\end{equation}
A normal positive functional $\kappa$ belongs to $\mathcal K(T)$ if and only if
\begin{equation}\label{eq:heisenberg-minorization}
 T^*(x)\geq\kappa(x)\one\qquad(x\in\M_+).
\end{equation}
When $\M=B(H)$ with $\dim H<\infty$, writing $\kappa(x)=\Tr(kx)$ identifies $\beta(T)$ with the maximal trace of a common positive lower bound of the outputs of all rank-one input states. Thus it recovers the variational common-lower-bound coefficient of \cite{Sou26}, with $\eta_{\mathrm{MD}}=1-\beta(T)$.
\end{theorem}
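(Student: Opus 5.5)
The plan is to settle the Heisenberg characterization \eqref{eq:heisenberg-minorization} first, use it to prove attainment, then derive the bounds \eqref{eq:beta}, and finish with the matrix case.

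\emph{Characterization.} For $\varphi\in\Sn$ and $x\in\M_+$ we have $(T\varphi)(x)=\varphi(T^*x)$. If $T^*x\geq\kappa(x)\one$, then $\varphi(T^*x)\geq\kappa(x)$, so $\kappa\leq T\varphi$ for every normal state. Conversely, suppose $\kappa\leq T\varphi$ for all $\varphi\in\Sn$. Then $\varphi(T^*x-\kappa(x)\one)\geq0$ for every normal state $\varphi$. A self-adjoint element that is nonnegative on all normal states is positive, because normal states separate $\M_{\mathrm{sa}}$ and the vector states $\langle\cdot\,\xi,\xi\rangle$ in any normal faithful representation are normal. This gives \eqref{eq:heisenberg-minorization}.

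\emph{Attainment.} Take $\kappa_j\in\mathcal K(T)$ with $\kappa_j(\one)\to\beta(T)$. Attainment is the main obstacle, because $\M_{*,+}$ is not weak-$*$ compact in the predual sense and a bounded sequence of normal functionals may only have singular cluster points. I would try the following.

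\begin{enumerate}
\item Take a weak-$*$ cluster point $\kappa$ in $\M^*$. It still satisfies $\kappa(x)\one\leq T^*x$ for $x\geq0$, and $\kappa(\one)=\beta(T)$.
\item Decompose $\kappa=\kappa_n+\kappa_s$ into normal and singular parts (Takesaki). The goal is $\kappa_s=0$.
\item Fix one normal state $\varphi_0$. Then $0\leq\kappa\leq T\varphi_0$, and $T\varphi_0$ is normal.
\item Any positive functional dominated by a normal positive functional is normal. Indeed, if $x_i\nearrow x$, then $0\leq\kappa(x-x_i)\leq(T\varphi_0)(x-x_i)\to0$. Equivalently, complete additivity on orthogonal projections passes to dominated functionals.
\end{enumerate}

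Hence $\kappa$ itself is normal and $\kappa\in\mathcal K(T)$, so the supremum is attained. Only domination by a single $T\varphi_0$ is used, and this is what makes the argument dimension-free. If $\mathcal K(T)$ contains only $0$, the maximum is trivially attained at $0$.

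\emph{Bounds.} We have $\beta\geq0$ since $0\in\mathcal K(T)$, and $\beta\leq1$ since $\kappa(\one)\leq(T\varphi)(\one)=1$. For $\delta(T)\leq1-\beta(T)$, let $\kappa$ attain $\beta=\beta(T)$ and assume $\beta<1$; otherwise $T\varphi=\kappa$ for all $\varphi$ and $\delta=0$. For $\varphi,\psi\in\Sn$ write $T\varphi-T\psi=(T\varphi-\kappa)-(T\psi-\kappa)$. Both brackets are normal positive functionals of mass $1-\beta$, so
\[
\norm{T\varphi-T\psi}\leq2(1-\beta).
\]
The diameter formula \eqref{eq:diameter} of Lemma~\ref{lem:delta} then gives $\delta(T)\leq1-\beta$.

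\emph{Matrix case.} When $\M=B(H)$ with $\dim H<\infty$, every positive functional has the form $\kappa=\Tr(k\,\cdot)$ with $k\geq0$. The condition $\kappa\leq T\varphi$ for all states reduces, by convexity and linearity in $\varphi$, to the rank-one states $\varphi=|\xi\rangle\langle\xi|$. So $\beta(T)$ is the maximal $\Tr k$ over common lower bounds $k\leq T(|\xi\rangle\langle\xi|)$. Matching this with the definition in \cite{Sou26} gives $\eta_{\mathrm{MD}}=1-\beta(T)$.
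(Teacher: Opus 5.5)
Your proof is correct and follows essentially the same route as the paper: weak-$*$ compactness in the full dual $\M^*$, with normality forced by domination by the single normal functional $T\varphi_0$ (so the normal/singular decomposition in your step 2 is unnecessary); the fact that normal states detect positivity, for the Heisenberg characterization; and reduction to rank-one inputs in finite dimension. The differences are cosmetic. You take a cluster point of a maximizing sequence instead of showing that $\mathcal K(T)$ itself is compact. You also bound $\delta(T)$ by writing $T\varphi-T\psi=(T\varphi-\kappa)-(T\psi-\kappa)$ and invoking the diameter identity of Lemma~\ref{lem:delta}, whereas the paper shows that $T-R_\kappa$ is positive with mass factor $1-\beta$ and restricts it to $E_0$.
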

\begin{proof}
The set $\mathcal K(T)$ contains zero. Fix $\varphi_0\in\Sn$ and put $\lambda=T\varphi_0$. Regard the order interval $[0,\lambda]$ as a subset of the full Banach dual $\M^*$. It is $\sigma(\M^*,\M)$ closed and contained in its closed unit ball, hence compact by Banach--Alaoglu. Every $f$ in this interval is normal: if $x_i\uparrow x$ in $\M_+$, then
\[
 0\leq f(x-x_i)\leq\lambda(x-x_i)\longrightarrow0.
\]
For every normal state $\varphi$, the constraint $f\leq T\varphi$ is weak-star closed, since it is the intersection over $x\in\M_+$ of the scalar closed inequalities $f(x)\leq(T\varphi)(x)$. Consequently $\mathcal K(T)$ is a compact subset of $[0,\lambda]$ in this topology. Evaluation at $\one$ is continuous and therefore has a maximum. Its value lies in $[0,1]$.

For admissible $\kappa$, let $b=\kappa(\one)$. The map
\[
 S=T-R_\kappa,\qquad R_\kappa(f)=f(\one)\kappa,
\]
is positive: for $f\geq0$, normalize $f$ to a state when it is nonzero. For such $f$, $(Sf)(\one)=(1-b)f(\one)$. A Jordan decomposition therefore gives $\norm{Sf}\leq(1-b)\norm f$ on $\M_{*,\mathrm{sa}}$. On $E_0$, $S=T$, so $\delta(T)\leq1-b$. Take a maximizing $\kappa$.

The order condition is equivalent to
$\varphi(T^*(x))\geq\kappa(x)$ for every normal state and every positive $x$. Normal states determine the order in a von Neumann algebra: a nonzero negative spectral projection of a self-adjoint element supports a normal state detecting the negative part. This proves~\eqref{eq:heisenberg-minorization}.

In finite dimension, normal states are convex combinations of rank-one states. A lower bound for all rank-one outputs is consequently a lower bound for all state outputs, and the converse is immediate. The density-functional correspondence preserves order and identifies $\kappa(\one)$ with $\Tr(k)$.
\end{proof}

\begin{remark}
The compactness used in the proof is the weak-star compactness of an order interval inside $\M^*$, not norm compactness of $\Sn$. Domination by one fixed normal functional prevents a singular functional from appearing in that interval. The maximizing functional need not be unique, but its maximal mass is a scalar and is unambiguous. No lattice infimum in the noncommutative order is asserted.
\end{remark}

\begin{example}[Strict contraction without a common lower bound]\label{ex:beta-not-delta}
On $\ell^\infty(\{1,2,3\})$, take a classical channel with output laws
\[
 p_1=(1/2,1/2,0),\qquad p_2=(0,1/2,1/2),\qquad
 p_3=(1/2,0,1/2).
\]
The smallest value of each output coordinate over the three inputs is zero, so $\beta(T)=0$. Pairwise distances are one, and convexity shows that the diameter of the output simplex is one. Lemma~\ref{lem:delta} gives $\delta(T)=1/2$. Thus $1-\beta(T)$ is a sufficient contraction bound, not the optimal coefficient in general.
\end{example}

\begin{example}[Positive subtraction is not CP subtraction]\label{ex:positive-not-cp}
On $M_2(\mathbb C)$, in the Schr\"odinger picture, let
\[
 T(\rho)=\frac{\Tr(\rho)I+\rho^{\mathsf T}}3.
\]
Its Choi matrix is $(I\otimes I+\mathsf F)/3\geq0$, where $\mathsf F$ is the flip; also $\Tr(T\rho)=\Tr\rho$. Every state output dominates $I/3$. Conversely, given a unit vector $v$, choose a pure input $\rho$ with $\rho^{\mathsf T}v=0$; any common lower bound $k$ then satisfies $\langle v,kv\rangle\leq1/3$. Hence the maximal lower bound is $I/3$ and $\beta(T)=2/3$. But
\[
 T(\rho)-\Tr(\rho)I/3=\rho^{\mathsf T}/3
\]
is not completely positive: its Choi matrix $\mathsf F/3$ has a negative eigenvalue. Thus Theorem~\ref{thm:minorization} only uses positivity of the residual. It must not be read as a CP-order Doeblin decomposition.
\end{example}

\section{Directional convergence and normal entrance laws}\label{sec:direction}

\begin{theorem}[Nested images with shrinking diameter]\label{thm:nested}
Let $P_n$ be channel preduals on a von Neumann algebra. Suppose that
$P_{n+1}(\Sn)\subseteq P_n(\Sn)$ and $d_n=\diam(P_n(\Sn))\to0$.
Then there exists a unique $\omega\in\Sn$ such that
\[
 \sup_{\varphi\in\Sn}\norm{P_n\varphi-\omega}\leq d_n.
\]
The sets $P_n(\Sn)$ need not be closed. The state $\omega$ belongs to the norm closure of each set $P_n(\Sn)$.
\end{theorem}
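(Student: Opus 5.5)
The plan is a Cauchy-sequence argument in the predual, using only completeness of $\M_*$ and norm closedness of $\Sn$; no compactness enters. Fix any $\varphi_0\in\Sn$ and put $\omega_n=P_n\varphi_0$. For $k\geq n$, the nesting hypothesis gives $P_k(\Sn)\subseteq P_n(\Sn)$. Hence $\omega_k$ and $\omega_n$ both lie in $P_n(\Sn)$, and
\[
 \norm{\omega_k-\omega_n}\leq d_n .
\]
Since $d_n\to0$, the sequence $(\omega_n)$ is Cauchy in $\M_*$, which is a Banach space. It therefore converges in norm to some $\omega\in\M_*$.

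Next, $\omega$ is a normal state. The set $\Sn$ is norm closed in $\M_*$: positivity is preserved under norm limits because each evaluation $f\mapsto f(x)$ is continuous, and so is evaluation at $\one$. Hence $\omega\in\Sn$. Moreover $\omega_k\in P_n(\Sn)$ for every $k\geq n$, so $\omega$ lies in the norm closure $\overline{P_n(\Sn)}$ for each $n$. This proves the final assertion, and it is the reason closedness of the images is not needed.

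For the rate, take any $\varphi\in\Sn$ and any $n$. For $k\geq n$, both $P_n\varphi$ and $\omega_k$ lie in $P_n(\Sn)$, so $\norm{P_n\varphi-\omega_k}\leq d_n$. Letting $k\to\infty$ and using continuity of the norm gives $\norm{P_n\varphi-\omega}\leq d_n$. Taking the supremum over $\varphi$ yields the stated bound. Note that $\diam$ of a set equals $\diam$ of its closure, so passing to the limit costs nothing.

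Uniqueness is immediate. If $\omega'$ satisfies the same bound, then for every $n$,
\[
 \norm{\omega-\omega'}\leq\norm{\omega-P_n\varphi_0}+\norm{P_n\varphi_0-\omega'}\leq 2d_n\to0 .
\]
The only step needing care is the passage from the non-closed images to the limit, that is, the closure statement together with the rate in the limit. The inequality $\norm{\omega_k-\omega_n}\leq d_n$ for all $k\geq n$, which is exactly where nesting is used, handles it. I expect no genuine obstacle beyond recording that $\Sn$ is norm closed in the predual.
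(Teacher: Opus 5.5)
Your proposal is correct and follows essentially the same argument as the paper. Nesting makes the anchored trajectory $P_n\varphi_0$ norm Cauchy, and completeness together with norm closedness of $\Sn$ makes its limit a normal state; the rate, uniqueness and closure membership then come from letting $k\to\infty$ in $\norm{P_n\varphi-P_k\varphi_0}\leq d_n$ (you obtain closure membership directly from $P_k\varphi_0\in P_n(\Sn)$, which is slightly more explicit than the paper's remark).
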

\begin{proof}
Choose $\varphi_0\in\Sn$ and set $z_n=P_n\varphi_0$. If $k\geq n$, nesting puts $z_k,z_n$ in $P_n(\Sn)$, so $\norm{z_k-z_n}\leq d_n$. Thus $(z_n)$ is norm Cauchy. The predual is complete, and its positive cone and normalization hyperplane are norm closed, hence its limit $\omega$ is a normal state. For any $\varphi\in\Sn$ and $k\geq n$, the two states $P_n\varphi,z_k$ lie in $P_n(\Sn)$, so $\norm{P_n\varphi-z_k}\leq d_n$. Pass to the limit in $k$. The bound proves both uniqueness and membership in each image closure.
\end{proof}

Apply this theorem to~\eqref{eq:nesting}, and use~\eqref{eq:diameter}. It proves Theorem~\ref{thm:intro-uniform}. In particular, for every fixed $m$, uniform backward mixing is equivalent to $\delta(B_{m,n})\to0$, and exponential uniform backward memory loss implies exponential backward mixing with the same diameter bound. This dimension-free conclusion does not extend to pointwise memory loss, as Section~\ref{sec:dimension} will show.

\begin{corollary}[Forward nesting]\label{cor:forward-nesting}
If $F_{0,n+1}(\Sn)\subseteq F_{0,n}(\Sn)$ for all $n$ and the forward process has uniform loss of memory, then it is uniformly mixing, with $\sup_\varphi\norm{F_{0,n}\varphi-\omega}\leq2\delta(F_{0,n})$. In particular exponential uniform loss of memory and nesting give exponential mixing with the same rate, in either direction.
\end{corollary}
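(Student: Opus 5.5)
The plan is to apply Theorem~\ref{thm:nested} directly to the family $P_n=F_{0,n}$. Nesting is now a hypothesis rather than the automatic property \eqref{eq:nesting} of backward products. The only remaining task is to translate uniform loss of memory into shrinking diameters of the images and to check the constants.

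First, I identify the diameter with the contraction coefficient. By Lemma~\ref{lem:delta}, equation~\eqref{eq:diameter},
\[
 d_n=\diam\bigl(F_{0,n}(\Sn)\bigr)=\sup_{\varphi,\psi\in\Sn}\norm{F_{0,n}\varphi-F_{0,n}\psi}=2\delta(F_{0,n}).
\]
Uniform loss of memory for the forward process is by definition $d_n\to0$, so $\delta(F_{0,n})\to0$.

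Second, Theorem~\ref{thm:nested} applies with $P_n=F_{0,n}$, since these are channel preduals (compositions of channel preduals), the images are nested by assumption, and $d_n\to0$. It produces a unique $\omega\in\Sn$ with
\[
 \sup_{\varphi}\norm{F_{0,n}\varphi-\omega}\leq d_n=2\delta(F_{0,n}).
\]
This is exactly uniform mixing with the stated bound. The factor $2$ arises because $\delta$ is normalized on $E_0$ while the diameter is measured on states. In the backward statement of Theorem~\ref{thm:intro-uniform} the bound is written directly in terms of the diameter.

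Third, for the exponential clause, suppose $d_n\leq Cr^{n}$, or equivalently $\delta(F_{0,n})\leq (C/2)r^n$. The displayed bound then gives $\sup_\varphi\norm{F_{0,n}\varphi-\omega}\leq Cr^n$, with the same constant and rate. For the backward direction, nesting holds automatically by \eqref{eq:nesting}, and the same argument applies verbatim to $B_{0,n}$; this covers the phrase ``in either direction.''

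There is no real obstacle in this argument. The only point needing care is to make sure that ``uniform loss of memory'' is read as $d_n\to0$ in the sense of Definition~\ref{def:hierarchy}, so that the identity $d_n=2\delta$ from Lemma~\ref{lem:delta} applies without any compactness assumption.
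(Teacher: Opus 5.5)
Your proposal is correct and is essentially the argument the paper intends: apply Theorem~\ref{thm:nested} to $P_n=F_{0,n}$, with nesting now taken as a hypothesis rather than supplied by \eqref{eq:nesting}, and rewrite the diameter bound as $2\delta(F_{0,n})$ via \eqref{eq:diameter}. The exponential clause, with the same constant and rate in either direction, then follows exactly as you describe.
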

This is the infinite-dimensional form of the nesting principle of \cite{Sou26}. In finite dimension pointwise memory loss suffices there, by Proposition~\ref{prop:hierarchy}. In infinite dimension the hypothesis must be uniform: the backward products of Theorem~\ref{thm:reservoir} below are nested and lose memory pointwise, yet do not converge.

\begin{theorem}[Entrance laws]\label{thm:entrance}
Suppose that, for every fixed $m\geq0$, $\delta(B_{m,n})\to0$ as $n\to\infty$. Then there is a unique sequence of normal states $(\omega_m)_{m\geq0}$ satisfying
\begin{equation}\label{eq:entrance}
 \omega_m=T_m\omega_{m+1}\qquad(m\geq0).
\end{equation}
For this sequence,
\begin{equation}\label{eq:entrance-estimate}
 \sup_{\varphi\in\Sn}\norm{B_{m,n}\varphi-\omega_m}
 \leq2\delta(B_{m,n}).
\end{equation}
More generally, the same estimate holds with an arbitrarily chosen boundary state $\varphi_n$ at time $n$ in place of a fixed $\varphi$.
\end{theorem}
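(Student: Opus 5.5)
For each fixed $m$, I will apply Theorem~\ref{thm:nested} to the family $P_n=B_{m,n}$, $n\geq m$. The images are nested by~\eqref{eq:nesting}, and by~\eqref{eq:diameter} their diameters are $d_n=2\delta(B_{m,n})\to0$. Theorem~\ref{thm:nested} then yields a unique $\omega_m\in\Sn$ with
\[
 \sup_{\varphi\in\Sn}\norm{B_{m,n}\varphi-\omega_m}\leq 2\delta(B_{m,n}),
\]
which is~\eqref{eq:entrance-estimate}. The supremum is taken over all states, so the bound holds for every choice of input at time $n$. This gives the final assertion: any boundary state $\varphi_n$ satisfies $\norm{B_{m,n}\varphi_n-\omega_m}\leq2\delta(B_{m,n})$.

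Next I will verify the relation~\eqref{eq:entrance}. Fix any $\varphi\in\Sn$. For $n>m+1$ the composition identity gives $B_{m,n}=T_mB_{m+1,n}$, so
\[
 \norm{T_m\omega_{m+1}-\omega_m}\leq\norm{T_m\omega_{m+1}-T_mB_{m+1,n}\varphi}+\norm{B_{m,n}\varphi-\omega_m}.
\]
The first term is at most $\norm{\omega_{m+1}-B_{m+1,n}\varphi}$ because $T_m$ is a contraction. Both terms tend to zero as $n\to\infty$, and hence $\omega_m=T_m\omega_{m+1}$.

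Uniqueness among all solutions of~\eqref{eq:entrance} is the one point that needs an argument beyond Theorem~\ref{thm:nested}. Let $(\omega'_m)$ be any sequence of normal states with $\omega'_m=T_m\omega'_{m+1}$ for all $m$. Iterating the relation gives $\omega'_m=B_{m,n}\omega'_n$ for every $n>m$. The moving-boundary estimate from the first paragraph, applied with $\varphi_n=\omega'_n$, then gives
\[
 \norm{\omega'_m-\omega_m}\leq2\delta(B_{m,n})\longrightarrow0,
\]
so $\omega'_m=\omega_m$. This step relies on the estimate being uniform in the boundary state, since the boundary states $\omega'_n$ change with $n$. No further obstacles are expected, as all remaining steps are direct consequences of contractivity and the composition identities.
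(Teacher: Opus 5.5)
Your proposal is correct and follows the paper's proof essentially step for step. Theorem~\ref{thm:nested} applied to the nested images $B_{m,n}(\Sn)$, whose diameter is $2\delta(B_{m,n})$, gives $\omega_m$ and the estimate; the identity $B_{m,n}=T_mB_{m+1,n}$ together with contractivity gives the entrance relation; and uniqueness follows from $\omega'_m=B_{m,n}\omega'_n$ combined with the estimate's uniformity in the boundary state.
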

\begin{proof}
For each $m$, Theorem~\ref{thm:nested} gives a limiting normal state $\omega_m$ and the bound. Since $B_{m,n}=T_mB_{m+1,n}$, norm continuity gives~\eqref{eq:entrance}. If $(\nu_m)$ is another normal entrance law, iteration gives $\nu_m=B_{m,n}\nu_n$, hence
$\norm{\nu_m-\omega_m}\leq2\delta(B_{m,n})\to0$. The uniform bound also proves the assertion about arbitrary boundary sequences.
\end{proof}

A single fixed starting time does not suffice for uniqueness of a full entrance law. For example, $T_0=R_\omega$ and $T_n=\id$ for $n\geq1$ give $B_{0,n}=R_\omega$ for all $n\geq1$, but $(\omega,\nu,\nu,\ldots)$ is an entrance law for every normal state $\nu$.

\begin{proposition}[Synchronization and convergence]\label{prop:anchor}
Let $P_n$ have pointwise loss of memory. Then $(P_n)$ is mixing if and only if $P_n\varphi_0$ converges in predual norm for one normal state $\varphi_0$. If loss of memory is uniform and $z_n=P_n\varphi_0$, then
\[
 \sup_{\varphi\in\Sn}\norm{P_n\varphi-\omega}
 \leq2\delta(P_n)+\norm{z_n-\omega}.
\]
The same statements hold for Ces\`aro averages. In particular, a common invariant normal state $\omega$ converts pointwise or uniform memory loss into the corresponding mixing property, in either direction; it also converts mean memory loss into mean ergodicity.
\end{proposition}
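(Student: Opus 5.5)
The proof is a short triangle-inequality argument, done in three steps: the equivalence, then the quantitative bound, then the Ces\`aro and invariant-state consequences.

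\textbf{The equivalence.} If $(P_n)$ is mixing to $\omega$, then $P_n\varphi_0\to\omega$ for every $\varphi_0$, so one direction is trivial. Conversely, suppose $z_n=P_n\varphi_0$ converges in predual norm to some $\omega\in\M_*$. The positive cone and the hyperplane $\{f:f(\one)=1\}$ are norm closed, exactly as in the proof of Theorem~\ref{thm:nested}, so $\omega\in\Sn$. For arbitrary $\varphi\in\Sn$,
\[
 \norm{P_n\varphi-\omega}\leq\norm{P_n\varphi-P_n\varphi_0}+\norm{z_n-\omega}.
\]
The first term tends to zero by pointwise loss of memory and the second by assumption. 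This gives mixing.

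\textbf{The uniform estimate.} Take the supremum over $\varphi$ in the same inequality. By the diameter identity~\eqref{eq:diameter} in Lemma~\ref{lem:delta},
\[
 \sup_\varphi\norm{P_n\varphi-P_n\varphi_0}\leq\sup_{\varphi,\psi}\norm{P_n\varphi-P_n\psi}=2\delta(P_n).
\]
This yields the displayed bound.

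\textbf{Ces\`aro averages.} Each $A_N$ is a convex combination of channel preduals, hence itself a positive, unital-on-states contraction of the predual. The argument above used nothing else: only the triangle inequality, closedness of $\Sn$, and the identity $2\delta(T)=\diam T(\Sn)$. The Jordan-decomposition proof of that identity applies verbatim to $A_N$. So the same statements hold with $P_n$ replaced by $A_N$.

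\textbf{Common invariant state.} If $T_k\omega=\omega$ for all $k$, then $P_n\omega=\omega$ for every forward or backward product, since these are just compositions of the $T_k$. Hence $A_N\omega=\omega$ as well. Taking $\varphi_0=\omega$ gives $z_n\equiv\omega$, so the convergence hypothesis is automatic. The pointwise statement then gives mixing, the bound gives $\sup_\varphi\norm{P_n\varphi-\omega}\leq2\delta(P_n)$, and therefore uniform loss of memory gives uniform mixing. The analogous conclusions for $A_N$ convert mean (or uniform mean) loss of memory into mean (or uniform mean) ergodicity.

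\textbf{Main obstacle.} The only point needing care is the normality of the limit $\omega$. This follows from completeness of the predual and norm-closedness of $\Sn$ inside $\M_*$, and requires no weak-star compactness. So I do not expect any real obstacle.
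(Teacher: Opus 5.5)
Your proposal is correct and follows the paper's argument. The proof runs the triangle inequality through the anchor trajectory $P_n\varphi_0$, uses the diameter identity~\eqref{eq:diameter} to obtain the uniform bound, and takes a common invariant state as a constant anchor. Your additional remarks fill in details the paper's two-line proof leaves implicit: norm-closedness of $\Sn$ in $\M_*$ ensures the limit is a normal state, and the diameter identity of Lemma~\ref{lem:delta} applies to the averages $A_N$.
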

\begin{proof}
Use
$\norm{P_n\varphi-\omega}\leq\norm{P_n\varphi-P_n\varphi_0}+\norm{P_n\varphi_0-\omega}$.
The necessity is immediate. If every $T_n$ fixes $\omega$, then every product and every average fixes it, so it is an admissible constant anchor.
\end{proof}

\begin{example}[Complete memory loss with a moving output]\label{ex:replacement}
Choose distinct normal states $\alpha,\gamma$ and let $T_n=R_{\sigma_n}$, where $\sigma_n=\alpha$ for even $n$ and $\sigma_n=\gamma$ for odd $n$. Since
$R_\alpha R_\gamma=R_\alpha$, one has
\[
 F_{0,n}=R_{\sigma_{n-1}},\qquad B_{0,n}=R_{\sigma_0}\quad(n\geq1).
\]
Both image diameters vanish after one step. The backward process is stationary after that step; the forward process does not converge. Its Ces\`aro averages converge uniformly to $R_{(\alpha+\gamma)/2}$. This example works already on a two-state classical algebra. It separates synchronization from convergence, not finite from infinite dimension.

If the sequence $\sigma_n$ instead alternates between $\alpha$ and $\gamma$ on blocks whose successive lengths dominate the sum of all previous lengths, its Ces\`aro means fail to converge. The forward process then has even exponential loss of memory, and uniform mean loss of memory, without mean ergodicity.
\end{example}

Nesting is sufficient for convergence under uniform memory loss; it is not necessary. For instance, take distinct states $\alpha,\omega$ and $\sigma_n=(1-1/(n+2))\omega+(1/(n+2))\alpha$. The forward replacement products converge uniformly to $\omega$, but their singleton images are distinct and hence are not nested.

\section{Quantitative contraction, windows, and tracking}\label{sec:rates}

\begin{theorem}[Deterministic contraction estimates]\label{thm:rates}
For either order $P_{m,n}=F_{m,n}$ or $P_{m,n}=B_{m,n}$,
\begin{equation}\label{eq:product-rate}
 \delta(P_{m,n})\leq\prod_{j=m}^{n-1}\delta(T_j)
 \leq Q_{m,n}:=\prod_{j=m}^{n-1}(1-\beta(T_j)).
\end{equation}
Consequently, $\sum_{j=0}^\infty\beta(T_j)=\infty$ implies uniform loss of memory in both orders from every fixed starting time. Backward normal entrance laws then exist uniquely and satisfy
\[
 \sup_{\varphi\in\Sn}\norm{B_{m,n}\varphi-\omega_m}\leq2Q_{m,n}.
\]
If every $T_j$ fixes a common normal state $\omega$, then
\[
 \sup_{\varphi\in\Sn}\norm{P_{m,n}\varphi-\omega}\leq2Q_{m,n}
\]
in both orders.
\end{theorem}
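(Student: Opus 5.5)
The bound~\eqref{eq:product-rate} follows by chaining two results already proved. In either order, $P_{m,n}$ is a composition of the factors $T_m,\dots,T_{n-1}$ in some order. Since scalar products commute, repeated use of submultiplicativity from Lemma~\ref{lem:delta} gives $\delta(P_{m,n})\leq\prod_{j=m}^{n-1}\delta(T_j)$. For $n=m$ we have $P_{m,m}=\id$, and the bound reads $\delta(\id)\leq1$ with the empty product equal to one. Theorem~\ref{thm:minorization} gives $\delta(T_j)\leq1-\beta(T_j)$ with all factors in $[0,1]$, so each factor can be replaced termwise to obtain $Q_{m,n}$.

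For the divergence statement, use $1-\beta\leq e^{-\beta}$, which gives $Q_{m,n}\leq\exp\bigl(-\sum_{j=m}^{n-1}\beta(T_j)\bigr)$. Removing finitely many terms $\beta(T_0),\dots,\beta(T_{m-1})$ does not affect divergence, because each term lies in $[0,1]$. Hence $Q_{m,n}\to0$ as $n\to\infty$ for every fixed $m$. By the diameter identity in Lemma~\ref{lem:delta}, $\sup_{\varphi,\psi}\norm{P_{m,n}\varphi-P_{m,n}\psi}=2\delta(P_{m,n})\leq2Q_{m,n}\to0$, which is uniform loss of memory in both orders.

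For the backward products, $\delta(B_{m,n})\to0$ for every $m$ is exactly the hypothesis of Theorem~\ref{thm:entrance}. That theorem yields a unique normal entrance law $(\omega_m)$ with $\sup_\varphi\norm{B_{m,n}\varphi-\omega_m}\leq2\delta(B_{m,n})\leq2Q_{m,n}$.

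The common fixed state case needs only the anchor argument of Proposition~\ref{prop:anchor}. If each $T_j\omega=\omega$, then $P_{m,n}\omega=\omega$ in either order. Therefore
\[
\norm{P_{m,n}\varphi-\omega}=\norm{P_{m,n}\varphi-P_{m,n}\omega}\leq\delta(P_{m,n})\norm{\varphi-\omega}\leq2Q_{m,n}.
\]
Here $\varphi-\omega\in E_0$ and $\norm{\varphi-\omega}\leq2$.

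No step presents a genuine obstacle. The only point needing care is that submultiplicativity applies to compositions in any order, which holds because every channel predual maps $E_0$ into itself, and that the termwise bounds may be multiplied because all factors are nonnegative.
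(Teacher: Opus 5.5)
Your proposal is correct and follows the paper's proof essentially step for step: repeated submultiplicativity from Lemma~\ref{lem:delta}, the bound $\delta(T_j)\leq1-\beta(T_j)$ from Theorem~\ref{thm:minorization}, the estimate $1-u\leq e^{-u}$, Theorem~\ref{thm:entrance} for the backward entrance laws, and the anchor argument of Proposition~\ref{prop:anchor} for a common invariant state. Your direct estimate $\norm{P_{m,n}(\varphi-\omega)}\leq\delta(P_{m,n})\norm{\varphi-\omega}$ is just that anchor bound with the constant anchor $z_n=P_{m,n}\omega=\omega$.
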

\begin{proof}
Apply Lemma~\ref{lem:delta} repeatedly and then Theorem~\ref{thm:minorization}. The estimate $1-u\leq e^{-u}$ for $0\leq u\leq1$ gives $Q_{m,n}\leq\exp(-\sum_{j=m}^{n-1}\beta(T_j))$; a factor equal to zero only strengthens the bound. Lemma~\ref{lem:delta}, Theorem~\ref{thm:entrance}, and Proposition~\ref{prop:anchor} give the remaining assertions.
\end{proof}

If $\beta(T_j)\geq\varepsilon>0$ on a set $G$ of indices and $N_G(m,n)=\#(G\cap[m,n))$, the bound is
\begin{equation}\label{eq:good-count}
 2Q_{m,n}\leq2(1-\varepsilon)^{N_G(m,n)}.
\end{equation}
Positive lower density, namely $N_G(0,n)\geq cn-C_0$ with $c>0$, gives an exponential rate in physical time. Merely having $N_G(0,n)=O(n)$ does not: that upper bound is true for every subset of indices. If $N_G(0,n)\asymp\log n$, the guaranteed bound is polynomial. Furthermore, divergence of $\sum_j\beta(T_j)$ can hold when $\beta(T_j)\to0$, so a positive accumulation point is not needed.

Contraction may also occur only in blocks. For a partition $m=n_0<n_1<\cdots$ and either chosen direction, put $q_k=\delta(P_{n_k,n_{k+1}})$. Whenever $n\geq n_K$, composition yields
\[
 \delta(P_{m,n})\leq\prod_{k=0}^{K-1}q_k.
\]
This condition is strictly more flexible than a one-step minorization condition. The order of each block must agree with the order of the process; a forward block estimate cannot simply be transferred to a backward block.

\begin{proposition}[Uniformity over starting times]\label{prop:windows}
For either direction let
\[
 h(\ell)=\sup_{m\geq0}\delta(P_{m,m+\ell}).
\]
Then the following conditions are equivalent:
\begin{enumerate}
\item $h(\ell)\to0$ as $\ell\to\infty$.
\item There are $L\geq1$ and $q<1$ such that $h(L)\leq q$.
\end{enumerate}
Under (2), $h(\ell)\leq q^{\lfloor\ell/L\rfloor}$; if $q=0$, the coefficient is zero for all $\ell\geq L$.
\end{proposition}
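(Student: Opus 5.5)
The plan is to prove (1)$\Rightarrow$(2) directly, and (2)$\Rightarrow$(1) through the quantitative bound $h(\ell)\leq q^{\lfloor\ell/L\rfloor}$. Everything rests on submultiplicativity of $\delta$ (Lemma~\ref{lem:delta}) and on the composition identities $F_{m,n}=F_{k,n}F_{m,k}$ and $B_{m,n}=B_{m,k}B_{k,n}$ for $m\leq k\leq n$.

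For (1)$\Rightarrow$(2): since $h(\ell)\to0$, there is some $L$ with $h(L)\leq1/2$, so (2) holds with $q=1/2$.

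For (2)$\Rightarrow$(1), fix $m\geq0$ and $\ell\geq0$, and write $\ell=kL+r$ with $k=\lfloor\ell/L\rfloor$ and $0\leq r<L$. I split $[m,m+\ell)$ into the consecutive windows $[m+jL,m+(j+1)L)$ for $j=0,\dots,k-1$, followed by a remainder window of length $r$. In the forward order the composition identity gives
\[
 F_{m,m+\ell}=F_{m+kL,m+\ell}\,F_{m+(k-1)L,m+kL}\cdots F_{m,m+L},
\]
and in the backward order
\[
 B_{m,m+\ell}=B_{m,m+L}\cdots B_{m+(k-1)L,m+kL}\,B_{m+kL,m+\ell}.
\]
Submultiplicativity then bounds $\delta(P_{m,m+\ell})$ by the product of the $\delta$ of the factors. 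The remainder factor has $\delta\leq1$ by Lemma~\ref{lem:delta}, or is the identity when $r=0$. Each full-window factor has $\delta\leq h(L)\leq q$, because $h$ is a supremum over all starting times. This gives $\delta(P_{m,m+\ell})\leq q^{k}$, and taking the supremum over $m$ yields $h(\ell)\leq q^{\lfloor\ell/L\rfloor}$. Since $q<1$, the right-hand side tends to $0$, which is (1).

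For the case $q=0$: every $\ell\geq L$ contains at least one full window, whose factor has $\delta=0$, so the product bound gives $h(\ell)=0$. The convention $0^0=1$ only matters for $\ell<L$, where the bound is trivial.

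There is no real obstacle here. The one point needing care is that the factorization must use the composition rule matching the chosen direction, so that each window factor really is of the form $P_{a,a+L}$ in the same order as the process. This is exactly the caveat stated just before the proposition.
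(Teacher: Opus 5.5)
Your proposal is correct and is essentially the paper's proof. The paper first records $h(a+b)\leq h(a)h(b)$ from the composition identities and submultiplicativity and then writes $\ell=kL+r$ with $h(r)\leq1$, whereas you factor each $P_{m,m+\ell}$ into windows directly before taking the supremum over $m$; this is the same argument.
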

\begin{proof}
The composition identities and submultiplicativity give $h(a+b)\leq h(a)h(b)$. If $h(\ell)\to0$, choose $L$ with $h(L)<1$. Conversely, write $\ell=kL+r$, $0\leq r<L$, and use $h(r)\leq1$.
\end{proof}

The statement ``for each fixed $m$, $\delta(P_{m,n})\to0$'' is weaker than uniformity over $m$. For example, take $T_j=\id$ except at $j=2^k$, where $T_j=\tfrac12\,\id+\frac12R_\omega$ on a nontrivial algebra. Every fixed tail contains infinitely many contraction steps, so it loses memory uniformly over states. There are nevertheless arbitrarily long intervals containing only identity maps, and $h(\ell)=1$ for every finite $\ell$.

\begin{theorem}[Tracking an approximate forward trajectory]\label{thm:tracking}
Suppose $\delta(F_{j,n})\leq q_{j,n}\leq1$ for $j<n$, and put $q_{n,n}=1$. For a sequence $\nu_j\in\Sn$, set $\varepsilon_j=\norm{T_j\nu_j-\nu_{j+1}}$. Then
\begin{equation}\label{eq:tracking}
 \norm{F_{m,n}\varphi-\nu_n}
 \leq2q_{m,n}+\sum_{j=m}^{n-1}q_{j+1,n}\varepsilon_j.
\end{equation}
If $q_{j,n}\to0$ for each fixed $j$ and $\sum_j\varepsilon_j<\infty$, the left-hand side tends to zero uniformly over initial states at any fixed $m$. The same conclusion holds when $q_{j,n}\leq Cr^{n-j}$ for some $r<1$ and $\varepsilon_j\to0$.
\end{theorem}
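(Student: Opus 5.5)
The plan is a telescoping argument that compares the true trajectory with the approximate one $(\nu_j)$ one step at a time. Fix $\varphi\in\Sn$ and $m<n$. Using the composition identity $F_{m,n}=F_{j+1,n}F_{m,j+1}$ and $F_{j,j+1}=T_j$, I would write
\[
 F_{m,n}\varphi-\nu_n
 =F_{m,n}(\varphi-\nu_m)+\sum_{j=m}^{n-1}\bigl(F_{j,n}\nu_j-F_{j+1,n}\nu_{j+1}\bigr).
\]
This holds because the sum telescopes to $F_{m,n}\nu_m-F_{n,n}\nu_n=F_{m,n}\nu_m-\nu_n$. Each summand equals $F_{j+1,n}(T_j\nu_j-\nu_{j+1})$, since $F_{j,n}=F_{j+1,n}T_j$.

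Next I would bound each term with Lemma~\ref{lem:delta}.
\begin{itemize}
\item The first term: $\varphi-\nu_m\in E_0$, so $\norm{F_{m,n}(\varphi-\nu_m)}\leq\delta(F_{m,n})\norm{\varphi-\nu_m}\leq2q_{m,n}$.
\item Each summand: $T_j\nu_j-\nu_{j+1}$ is a difference of normal states, hence lies in $E_0$ with norm $\varepsilon_j$. Thus $\norm{F_{j+1,n}(T_j\nu_j-\nu_{j+1})}\leq\delta(F_{j+1,n})\varepsilon_j\leq q_{j+1,n}\varepsilon_j$.
\item The endpoint $j=n-1$: here $F_{n,n}=\id$ and the convention $q_{n,n}=1$ covers it.
\end{itemize}
The triangle inequality then gives \eqref{eq:tracking}, uniformly in $\varphi$.

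For the asymptotic claims I would use a dominated-convergence argument for series. In the first case, $q_{m,n}\to0$ by hypothesis. Each term $q_{j+1,n}\varepsilon_j$ tends to $0$ as $n\to\infty$ for fixed $j$ and is dominated by $\varepsilon_j$, which is summable. So I would split the sum at a large $J$: the tail $\sum_{j\geq J}\varepsilon_j$ is small, and the finitely many head terms vanish as $n\to\infty$.

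In the geometric case, $2q_{m,n}\leq2Cr^{n-m}\to0$. The sum is bounded by $\sum_j\min(1,Cr^{n-j-1})\varepsilon_j$, a convolution of the summable kernel $\min(1,Cr^k)$ with a null sequence. I would again split at $j<J$ and $j\geq J$: the first part is at most $Cr^{n-J}\sum_{j<J}\varepsilon_j$, and the second is at most $\sup_{j\geq J}\varepsilon_j\cdot\sum_k\min(1,Cr^k)$, both of which can be made small.

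The only delicate points are two bookkeeping issues. First, the order of composition must be forward, so that $F_{j,n}=F_{j+1,n}T_j$ and not $T_jF_{j+1,n}$. Second, the $j=n-1$ endpoint must be matched to the $q_{n,n}=1$ convention. Neither step is a genuine obstacle, and no compactness is needed.
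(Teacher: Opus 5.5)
Your proof is correct and follows the paper's argument: the same telescoping identity $F_{m,n}\varphi-\nu_n=F_{m,n}(\varphi-\nu_m)+\sum_{j=m}^{n-1}F_{j+1,n}(T_j\nu_j-\nu_{j+1})$ with the $E_0$-restriction bound, and then the same head/tail splitting in both the summable and the geometric cases. One small fix: in the kernel $\min(1,Cr^{n-j-1})$, write $\max(C,1)$ in place of $C$, so that the $j=n-1$ term (where $q_{n,n}=1$ by convention) is still dominated if $C<1$; this changes nothing else in the argument.
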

\begin{proof}
A telescoping expansion gives
\[
 F_{m,n}\varphi-\nu_n
 =F_{m,n}(\varphi-\nu_m)
  +\sum_{j=m}^{n-1}F_{j+1,n}(T_j\nu_j-\nu_{j+1}).
\]
Every functional in parentheses belongs to $E_0$. Apply the restriction norm bound and $\norm{\varphi-\nu_m}\leq2$. In the summable case, split the sum into a finite initial part and a tail with small total mass. In the exponential case, first make the late errors small and use the geometric sum bound, then let the finitely many early contributions decay.
\end{proof}

In particular, contraction gives a synchronized moving state, not necessarily a stationary limit. If $\nu_n\to\omega$, tracking yields mixing to $\omega$; if $\nu_n$ oscillates, there need not be any limiting state. This is the forward counterpart of the backward entrance-law statement.

\section{Infinite-dimensional counterexamples and universality}\label{sec:dimension}

The following retraction transfers classical counterexamples without imposing a trace or an algebra type.

\begin{lemma}[A countable classical retract]\label{lem:retract}
Every infinite-dimensional $\sigma$-finite von Neumann algebra admits normal unital completely positive maps
\[
 J:\ell^\infty(\N_0)\longrightarrow\M,\qquad
 E:\M\longrightarrow\ell^\infty(\N_0),\qquad EJ=\id,
\]
where $J$ is a unital injective $*$-homomorphism. Its predual $E_*:\ell^1(\N_0)\to\M_*$ is an isometry, $J_*E_*=\id$, and both preduals preserve normal states. If $Q_n$ are classical stochastic operators on $\ell^1(\N_0)$, then
\begin{equation}\label{eq:lift}
 \widehat T_n=E_*Q_nJ_*
\end{equation}
are normal quantum-channel preduals on $\M$. Every nonempty product of the lifted maps equals $E_*$ followed by the corresponding classical product followed by $J_*$. In particular, output distances and suprema over states for nonempty products are exactly the classical ones.
\end{lemma}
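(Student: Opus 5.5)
The plan is to construct $J$ from a countable partition of unity and $E$ from normal states supported under those projections, and then to check the predual identities.

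\emph{Step 1 (projections).} Since $\M$ is infinite dimensional, it contains an infinite family of mutually orthogonal nonzero projections. Indeed, a von Neumann algebra in which every such family is finite has finite-dimensional maximal abelian subalgebras, and is therefore finite dimensional. $\sigma$-finiteness makes every orthogonal family countable. Choose a sequence $(p_k)_{k\geq0}$ of mutually orthogonal nonzero projections and absorb $\one-\sum_k p_k$ into $p_0$, so that $\sum_k p_k=\one$ strongly.

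\emph{Step 2 (the maps).} Define $J(a)=\sum_k a_kp_k$, the sum converging strongly. This is a unital injective normal $*$-homomorphism, normal because it is an isomorphism onto the abelian von Neumann algebra generated by the $p_k$. For each $k$ pick a normal state $\chi_k$ with $\chi_k(p_k)=1$; for instance, take $\chi_k$ a vector state for a unit vector in $p_kH$ in a faithful normal representation, or the compression of the faithful state $\chi$ to $p_k$, normalized. Define $E(x)=(\chi_k(x))_k\in\ell^\infty(\N_0)$. Then $E$ is unital and positive with commutative range, hence completely positive. It is normal: if $x_i\uparrow x$, then $\chi_k(x_i)\uparrow\chi_k(x)$ for each $k$ with uniformly bounded entries, which is monotone convergence in $\ell^\infty$. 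Finally $\chi_k(p_j)=\delta_{jk}$, since $\chi_k(p_k)=1$ forces $\chi_k(p_j)=0$ for $j\neq k$. By normality of $\chi_k$, this gives $\chi_k(J(a))=a_k$, so $EJ=\id$.

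\emph{Step 3 (preduals).} $E_*$ sends $\xi\in\ell^1$ to $\sum_k\xi_k\chi_k$. By adjoining, $J_*E_*=(EJ)_*=\id$. Both maps are contractions and preserve states. For isometry, $\norm{E_*\xi}\leq\norm\xi_1$, while $\norm\xi_1=\norm{J_*E_*\xi}\leq\norm{E_*\xi}$.

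\emph{Step 4 (lifted channels).} A classical stochastic operator $Q_n$ on $\ell^1$ is the predual of a normal unital positive map on $\ell^\infty$, and such a map is completely positive because $\ell^\infty$ is commutative. Hence $\widehat T_n$ is the predual of the normal UCP map $JQ_n^*E$. For products, $\widehat T_{n_1}\cdots\widehat T_{n_r}=E_*Q_{n_1}(J_*E_*)Q_{n_2}\cdots Q_{n_r}J_*=E_*Q_{n_1}\cdots Q_{n_r}J_*$.

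\emph{Step 5 (distances and suprema).} For normal states $\varphi,\psi$, the isometry gives
\[
\norm{E_*P J_*\varphi-E_*PJ_*\psi}=\norm{PJ_*\varphi-PJ_*\psi}_1 .
\]
Since $J_*$ maps $\Sn$ onto the probability vectors, because $J_*E_*=\id$, suprema over states coincide with the classical ones.

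The main obstacle is Step 1: producing an infinite orthogonal family of nonzero projections in a general infinite-dimensional algebra. I would argue through the center and maximal abelian subalgebras, using that an abelian von Neumann algebra with only finitely many orthogonal projections is finite dimensional, together with the fact that a finite-dimensional masa forces $\M$ to be finite dimensional.
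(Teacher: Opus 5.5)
Your proof is correct, and the construction is the paper's. The ingredients match: a countable partition of unity $(p_k)$ obtained by absorbing $\one-\sum_kp_k$ into $p_0$, the map $J(a)=\sum_ka_kp_k$, and $E$ built from normal states supported under the $p_k$ (the paper uses exactly your second option, $\chi(p_k\,\cdot\,p_k)/\chi(p_k)$). The product formula likewise follows by cancelling $J_*E_*=\id$. Your isometry argument, $\norm{\xi}_1=\norm{J_*E_*\xi}_1\leq\norm{E_*\xi}$, is the abstract form of the paper's test against $J(b)$ with $\norm{b}_\infty\leq1$.

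The real difference is Step~1. The paper argues directly in $\M$. Suppose there is no infinite orthogonal family. Then every nonzero projection dominates a minimal projection of $\M$, since otherwise a strictly decreasing sequence $e_1>e_2>\cdots$ gives the orthogonal differences $e_k-e_{k+1}$. A maximal orthogonal family $q_1,\dots,q_r$ of minimal projections is then finite with sum $\one$. The spectral theorem gives $q\M q=\mathbb Cq$, a short computation gives $\dim q_i\M q_j\leq1$, and so $\dim\M\leq r^2$.

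Your route through a maximal abelian subalgebra $\mathcal C$ is valid but saves no work. Showing that $\mathcal C$ is finite dimensional needs the same decreasing-sequence argument inside $\mathcal C$. Showing that a finite-dimensional masa forces $\dim\M<\infty$ needs $e\M e\subseteq\mathcal C'\cap\M=\mathcal C$ for each minimal projection $e$ of $\mathcal C$, which gives $e\M e=\mathbb Ce$, followed by the same corner-dimension count. Since you only cite these two facts, the paper's direct argument is the more economical way to make Step~1 self-contained. The appeal to the center is unnecessary.

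On the other hand, your vector-state choice of $\chi_k$ shows that $\sigma$-finiteness is never needed; compressing any normal state that does not vanish on $p_k$ works equally well. The paper uses $\sigma$-finiteness only to have a faithful state to compress. Your remark that $\sigma$-finiteness makes orthogonal families countable plays no role, since one can always extract a sequence from an infinite family.
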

\begin{proof}
We first show that $\M$ contains an infinite family of pairwise orthogonal nonzero projections. Suppose not. Then every nonzero projection $e$ dominates a minimal projection of $\M$: otherwise one can choose a strictly decreasing sequence $e=e_1>e_2>\cdots$ of nonzero projections, and the differences $e_k-e_{k+1}$ form an infinite orthogonal family. Let $\{q_1,\ldots,q_r\}$ be a maximal orthogonal family of minimal projections; it is finite by assumption. If $\one-\sum_iq_i\ne0$, it dominates a further minimal projection, contradicting maximality. Hence $\sum_iq_i=\one$. For a minimal projection $q$, the von Neumann algebra $q\M q$ has no projections other than $0$ and $q$, so the spectral theorem gives $q\M q=\mathbb Cq$. Let $x,y\in q_i\M q_j$ with $x\ne0$. Then $x^*x\in q_j\M q_j$ and $xx^*\in q_i\M q_i$, so $x^*x=cq_j$ and $xx^*=cq_i$ with $c=\norm x^2>0$. Also $x^*y=\mu q_j$ for some $\mu\in\mathbb C$. Therefore $y=q_iy=c^{-1}xx^*y=c^{-1}\mu\,xq_j=c^{-1}\mu\,x$. Thus $\dim q_i\M q_j\leq1$, and $\dim\M=\dim\sum_{i,j}q_i\M q_j\leq r^2$, a contradiction.

Let $(q_j)_{j\geq0}$ be such an infinite orthogonal family. Put $p_0=q_0+(\one-\sum_jq_j)$ and $p_j=q_j$ for $j\geq1$, the sum being strongly convergent. These are nonzero pairwise orthogonal projections with $\sum_jp_j=\one$.

Choose a faithful normal state $\chi$ and set
\[
 \sigma_j(x)=\frac{\chi(p_jxp_j)}{\chi(p_j)},\qquad
 J(a)=\sum_{j\geq0}a_jp_j,\qquad E(x)=(\sigma_j(x))_{j\geq0}.
\]
These maps are normal and unital. The map $J$ is a $*$-homomorphism; $E$ is completely positive because each coordinate is a positive scalar functional. Since $\sigma_j(p_k)=\delta_{jk}$, $EJ=\id$.

For $a\in\ell^1$, $E_*a=\sum_ja_j\sigma_j$ in norm. The triangle inequality gives an upper bound $\sum_j|a_j|$, and testing on $J(b)$ with $\norm b_\infty\leq1$ and $a_jb_j=|a_j|$ gives the reverse bound. Thus $E_*$ is isometric. The predual identity follows from $EJ=\id$. Every classical stochastic map has a normal unital positive, hence completely positive, adjoint on the commutative algebra. Therefore the adjoint of~\eqref{eq:lift} is $JQ_n^*E$, a normal unital CP map. Cancellation of $J_*E_*$ proves the product formula. Finally $J_*$ maps states onto all probability vectors, since $J_*E_*=\id$, proving equality of suprema.
\end{proof}

\begin{example}[Pointwise mixing without uniformity]\label{ex:countdown}
On $\ell^1(\N_0)$, define the stochastic operator
\[
 C e_0=e_0,\qquad C e_j=e_{j-1}\quad(j\geq1),
\]
where $e_j$ is the point mass at $j$. Its adjoint on $\ell^\infty$ is the normal unital $*$-homomorphism
$C^*a=(a_0,a_0,a_1,a_2,\ldots)$.
For every probability vector $p$,
\begin{equation}\label{eq:countdown}
 \norm{C^np-e_0}_1=2\sum_{j>n}p_j\longrightarrow0.
\end{equation}
The convergence is not uniform: take $p=e_{n+1}$ to obtain distance two. In fact $\delta(C^n)=1$ for all $n$.

Let $A_N=N^{-1}\sum_{k=0}^{N-1}C^k$. Pointwise mixing gives $A_Np\to e_0$. But for $p=e_N$ all states $C^kp$, $0\leq k<N$, are supported outside zero, so $\norm{A_Np-e_0}_1=2$. Thus the homogeneous process is mixing and mean ergodic, but is neither uniformly mixing nor uniformly mean ergodic. Lifting $C$ by Lemma~\ref{lem:retract} gives the same homogeneous counterexample on every infinite-dimensional $\sigma$-finite $\M$, in both directions.
\end{example}

This example simultaneously proves that pointwise mixing does not imply uniform mean ergodicity and that mean ergodicity does not imply its uniform version. A trace on the algebra does not by itself repair either implication: the lifted construction works on any $\mathrm{II}_1$ factor, although the chosen channels need not preserve its trace.

\begin{example}[A strict prefix bound does not accumulate]\label{ex:prefix}
Fix $0<q<1$, take $T_0=q\,\id+(1-q)R_{e_0}$, and $T_j=C$ for $j\geq1$, with $C$ as above. These maps commute because $C e_0=e_0$. For $n\geq1$, both ordered products are
\[
 P_n=qC^{n-1}+(1-q)R_{e_0}.
\]
Hence every initial state converges to $e_0$, while
\[
 \sup_p\norm{P_np-e_0}_1=2q,\qquad \delta(P_n)=q.
\]
For the coefficient identity use the upper bound and the pair $e_0,e_n$, whose outputs have distance $2q$. Thus even mixing together with $\limsup_n\delta(P_n)<1$ does not imply uniform mixing. Hence a strict bound on the coefficient of the whole prefix cannot replace a recurrent block-contraction hypothesis. Again the example lifts to every infinite-dimensional $\sigma$-finite algebra.
\end{example}

\begin{theorem}[Compact backward images without stability]\label{thm:reservoir}
Let $(a_n)_{n\geq1}$ be any sequence in $\{0,1\}$. There are stochastic operators $Q_n$ on $\ell^1(\N_0)$ such that, for $R_n=Q_1\cdots Q_n$,
\begin{align}
 R_n\bigl(\mathcal P(\N_0)\bigr)&=\operatorname{conv}\{e_0,e_1\},\label{eq:reservoir-image}\\
 \norm{R_np-e_{a_n}}_1&\leq2\sum_{j>n}p_j,\label{eq:reservoir-anchor}\\
 \norm{R_np-R_nq}_1&\leq2\sum_{j>n}(p_j+q_j).\label{eq:reservoir-ml}
\end{align}
Consequently the backward products have pointwise loss of memory for every choice of $(a_n)$, but they converge to one state if and only if $(e_{a_n})$ converges. Their Ces\`aro averages converge to one state if and only if the scalar means $N^{-1}\sum_{n=1}^Na_n$ converge. Uniform loss of memory always fails: $\delta(R_n)=1$.
\end{theorem}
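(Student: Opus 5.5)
The plan is to build the $Q_n$ as deterministic (point-mass-to-point-mass) stochastic operators, defined recursively in $n$, so that $R_n$ has an explicit two-point form. The inductive statement I aim for is
\[
 R_ne_j=e_{a_n}\quad(0\leq j\leq n),\qquad R_ne_j=e_{1-a_n}\quad(j\geq n+1).
\]
This statement implies every assertion of the theorem.

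\textbf{Construction of the $Q_n$.}
\begin{itemize}
\item For the base case, define $Q_1e_j=e_{a_1}$ for $j\leq1$ and $Q_1e_j=e_{1-a_1}$ for $j\geq2$. Then $R_1=Q_1$ has the required form.
\item For the inductive step, assume the form holds for $R_{n-1}$. This means $R_{n-1}$ sends $e_0,\dots,e_{n-1}$ to $e_{a_{n-1}}$ and sends $e_n,e_{n+1},\dots$ to $e_{1-a_{n-1}}$. Since $R_n=R_{n-1}Q_n$, it suffices to route point masses correctly under $Q_n$.
\item If $a_n=a_{n-1}$, set $Q_ne_j=e_0$ for $j\leq n$ and $Q_ne_j=e_n$ for $j\geq n+1$.
\item If $a_n\neq a_{n-1}$, set $Q_ne_j=e_n$ for $j\leq n$ and $Q_ne_j=e_0$ for $j\geq n+1$.
\end{itemize}
In each case a one-line check gives the claimed form for $R_n$, because $R_{n-1}e_0=e_{a_{n-1}}$ and $R_{n-1}e_n=e_{1-a_{n-1}}$. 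Each $Q_n$ is induced by a map $\N_0\to\N_0$, so it is stochastic.

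\textbf{Verification of the displayed properties.} By linearity and norm continuity, for every probability vector $p$,
\[
 R_np=\Bigl(\sum_{j\leq n}p_j\Bigr)e_{a_n}+\Bigl(\sum_{j>n}p_j\Bigr)e_{1-a_n}.
\]
\begin{itemize}
\item The image is contained in $\operatorname{conv}\{e_0,e_1\}$. Taking $p=te_0+(1-t)e_{n+1}$ shows that every point of the segment is attained, which gives \eqref{eq:reservoir-image}.
\item The distance of $R_np$ to $e_{a_n}$ is $2\sum_{j>n}p_j$, which gives \eqref{eq:reservoir-anchor}.
\item The triangle inequality then gives \eqref{eq:reservoir-ml}.
\item Since $R_ne_0=e_{a_n}$ and $R_ne_{n+1}=e_{1-a_n}$ are at distance $2$, Lemma~\ref{lem:delta} gives $\delta(R_n)=1$.
\end{itemize}

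\textbf{Consequences.}
\begin{itemize}
\item Pointwise memory loss follows from \eqref{eq:reservoir-ml}, because the tails of fixed $p,q$ vanish.
\item By \eqref{eq:reservoir-anchor}, $R_np$ and $e_{a_n}$ are asymptotic. Hence $R_np$ converges if and only if $(e_{a_n})$ converges, that is, if and only if $a_n$ is eventually constant.
\item For Ces\`aro means, the $N^{-1}$-weighted sum of the tail errors $2\sum_{j>n}p_j$ tends to zero, since a Ces\`aro mean of a null sequence is null; the identity term $R_0$ contributes $O(1/N)$.
\item Therefore $A_Np$ is asymptotic to $N^{-1}\sum_{n=1}^Ne_{a_n}=(1-\bar a_N)e_0+\bar a_Ne_1$, where $\bar a_N=N^{-1}\sum_{n=1}^Na_n$. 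Because $e_0\neq e_1$, this converges exactly when $\bar a_N$ converges.
\end{itemize}

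\textbf{Main obstacle.} The only genuinely delicate point is choosing the recursion so that the backward product, which composes new maps on the right, keeps the two-class structure. The trick is that $Q_n$ only needs to target two sites, $0$ and $n$, whose images under $R_{n-1}$ are already known to be $e_{a_{n-1}}$ and $e_{1-a_{n-1}}$. Once this is in place, the rest is bookkeeping.
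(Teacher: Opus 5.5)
Your proof is correct, and its mechanism is the paper's. Each $Q_n$ is induced by a point map $s_n:\N_0\to\N_0$, and since new factors are appended on the right, $s_n$ only has to route into two sites whose images under $R_{n-1}$ are already known to be $e_0$ and $e_1$.

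The difference lies in the two-valued function realized by $s_1\circ\cdots\circ s_n$. The paper uses $r_n(j)=a_n$ for $j\leq n$ and $r_n(j)=j\bmod2$ for $j>n$, and routes $s_{n+1}$ into the fresh sites $2n+2,2n+3$. You put the whole tail at $1-a_n$ and route through the sites $0$ and $n$. Your choice is slightly more economical, since it even gives equality in \eqref{eq:reservoir-anchor}, and it establishes every assertion of the theorem as stated.

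The paper's frozen parity tail, however, is used right after the proof. For $j>N$ one has $R_ne_j=e_{j\bmod2}$ for all $1\leq n\leq N$, so with $a_n=n\bmod2$ uniform mean ergodicity fails. In your construction the tail value moves with $a_n$, so for $a_n=n\bmod2$ every trajectory alternates. Pairing $n=2k-1$ with $n=2k$ shows that the $e_0$-coefficient of $N^{-1}\sum_{n\leq N}R_np$ is $\tfrac12+O(1/N)$ uniformly in $p$. That example would therefore become uniformly mean ergodic under your construction.

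The paper's explicit maps $s_k,r_k$ and the values $u_1=4$, $v_1=5$ are also reused in Remark~\ref{rem:FCS-pointwise}. That computation would have to be redone with your labelling; it adapts, but not verbatim.
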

\begin{proof}
For each $n\geq1$ define a deterministic function $r_n:\N_0\to\{0,1\}$ by
\[
 r_n(j)=\begin{cases}a_n,&j\leq n,\\ j\bmod2,&j>n.\end{cases}
\]
Let $s_1=r_1$. Given $n\geq1$, put $u_n=2n+2$, $v_n=2n+3$, and define
\[
 s_{n+1}(j)=\begin{cases}
 u_n,&r_{n+1}(j)=0,\\v_n,&r_{n+1}(j)=1.
 \end{cases}
\]
Since $r_n(u_n)=0$ and $r_n(v_n)=1$, one has
$r_n\circ s_{n+1}=r_{n+1}$. Thus induction gives
$s_1\circ\cdots\circ s_n=r_n$. Define $Q_ne_j=e_{s_n(j)}$. Each $Q_n$ is stochastic, with normal unital $*$-homomorphic adjoint $a\mapsto a\circ s_n$. The backward product is therefore
\[
 R_ne_j=e_{r_n(j)}.
\]
Both parities occur among $j>n$, so the image of all probability vectors is exactly the segment in~\eqref{eq:reservoir-image}. Its diameter is two, giving $\delta(R_n)=1$. All coordinates $j\leq n$ are sent to $a_n$. Only the tail mass can differ from $e_{a_n}$, and its contribution to an $\ell^1$ difference is at most twice that mass. This proves~\eqref{eq:reservoir-anchor}; the triangle inequality proves~\eqref{eq:reservoir-ml}.

For each fixed $p$, the right-hand side of~\eqref{eq:reservoir-anchor} tends to zero. Taking $p=e_0$ gives $R_ne_0=e_{a_n}$ exactly, proving the assertion about ordinary limits. For averages,
\[
 \left\lVert\frac1N\sum_{n=1}^NR_np-
                    \frac1N\sum_{n=1}^Ne_{a_n}\right\rVert_1
 \leq\frac2N\sum_{n=1}^N\sum_{j>n}p_j\longrightarrow0
\]
by Ces\`aro summation. The last vector average is
$(1-\bar a_N)e_0+\bar a_Ne_1$, where $\bar a_N=N^{-1}\sum_{n=1}^Na_n$. This proves the Ces\`aro assertion. Including an initial identity term or shifting the average by one term changes the average by at most $2/N$ and has no effect on the conclusions.
\end{proof}

For $a_n=n\bmod2$, this gives pointwise backward memory loss and mean ergodicity to $(e_0+e_1)/2$, but not mixing. Uniform mean ergodicity also fails: for $j>N$, all $R_ne_j$, $1\leq n\leq N$, equal $e_{j\bmod2}$, at distance one from the mean limit.

For a failure of mean ergodicity, let $N_0=0$, $N_k=2^{2^k}$ for $k\geq1$, and prescribe $a_n=k\bmod2$ for $N_{k-1}<n\leq N_k$. Then $N_{k-1}/N_k\to0$, and the means at times $N_k$ approach the alternating block values. Hence the scalar Ces\`aro means do not converge. Pointwise mean loss of memory still holds by~\eqref{eq:reservoir-ml} and Ces\`aro summation. This example therefore separates weak mean memory loss from mean ergodicity even for backward products.

\begin{proof}[Proof of Theorem~\ref{thm:intro-dimension}]
If $\M$ is finite dimensional, its normal state space is norm compact. Proposition~\ref{prop:hierarchy} upgrades pointwise backward memory loss to uniform memory loss, and Theorem~\ref{thm:intro-uniform} gives mixing. Hence (1) implies (2), which trivially implies (3).

If $\M$ is infinite dimensional, lift the stochastic sequence in Theorem~\ref{thm:reservoir} using Lemma~\ref{lem:retract}. With $\sigma_i=E_*e_i$, every nonempty backward product has state image exactly $\operatorname{conv}\{\sigma_0,\sigma_1\}$, a norm-compact segment. Equations~\eqref{eq:reservoir-anchor}--\eqref{eq:reservoir-ml} and the isometry give pointwise loss of memory on all normal states of $\M$. Choose the non-Ces\`aro-convergent block sequence above. The state $\sigma_0$ has output $\sigma_{a_n}$, so its trajectory and its Ces\`aro averages do not converge. Thus (3) fails. This proves (3) implies (1) and the additional assertion.
\end{proof}

\begin{remark}[What compactness does and does not repair]
The reservoir example has finite-rank products, a collectively norm-compact set of all nontrivial state outputs, and nested images. None of these assumptions converts pointwise backward memory loss into uniform memory loss. The missing compactness is on the input-state space: optimizing input states may depend on $n$ and may put all their mass in an unexplored tail. In contrast, once the image diameters actually tend to zero, completeness alone proves Theorem~\ref{thm:nested}.
\end{remark}

\section{Rates, periodicity, and homogeneous comparisons}\label{sec:homogeneous}

\begin{example}[Arbitrarily slow uniform mixing]\label{ex:slow}
On any nontrivial von Neumann algebra choose $\omega\in\Sn$ and a positive nonincreasing sequence $b_n\downarrow0$, with $b_0=1$. Define
\[
 T_n=\frac{b_{n+1}}{b_n}\,\id+
          \left(1-\frac{b_{n+1}}{b_n}\right)R_\omega.
\]
The maps commute, and both ordered products satisfy
\[
 P_{0,n}=b_n\,\id+(1-b_n)R_\omega,\qquad
 \delta(P_{0,n})=b_n,
\]
while
\[
 \sup_{\varphi\in\Sn}\norm{P_{0,n}\varphi-\omega}
 =b_n c_\omega,\qquad
 c_\omega:=\sup_{\varphi\in\Sn}\norm{\varphi-\omega}>0.
\]
The coefficient equality follows from $P_{0,n}|_{E_0}=b_n\,\id$. The channel can therefore have any prescribed decreasing positive relaxation profile. Choosing $b_n=1/(n+1)$ proves that uniform mixing need not be exponential for inhomogeneous processes. Choosing $b_n=1/\log(n+e)$, for which $b_0=1$, gives a still slower example.

For $M_d(\mathbb C)$ and $\omega=I/d$, the correct constant is $c_\omega=2(1-1/d)$, not two. Indeed, diagonalize $\rho$ and write $\rho=\sum_j\lambda_jP_j$; convexity bounds $\norm{\rho-I/d}_1$ by the pure-state value $2(1-1/d)$, and every pure state attains that value. In particular no state can be orthogonal to the full support of $I/d$.
\end{example}

\begin{example}[Uniform mean ergodicity without mixing]\label{ex:periodic}
On the two-state algebra $\mathbb C^2$, let $Ue_0=e_1$, $Ue_1=e_0$. Its invariant state is $\omega=(e_0+e_1)/2$ and
\[
 \sup_p\left\lVert\frac1N\sum_{k=0}^{N-1}U^kp-\omega\right\rVert_1
 \leq\frac1N.
\]
For even $N$ the average is exactly $\omega$; for odd $N$ it differs by $(p-\omega)/N$. Nevertheless $U^ne_0$ alternates and does not converge, and $\norm{U^ne_0-U^ne_1}_1=2$ for every $n$. Thus uniform mean ergodicity, and even uniform mean loss of memory, do not imply mixing or pointwise memory loss.

On $M_2(\mathbb C)$ the channel
\[
 T(\rho)=\begin{pmatrix}\rho_{11}&0\\0&\rho_{00}\end{pmatrix}
\]
(with indices $0,1$) gives the same example after deleting off-diagonal entries at the first step. It is completely positive: use Kraus operators $|1\rangle\langle0|$ and $|0\rangle\langle1|$. It has invariant density $I/2$. A two-projection version of Lemma~\ref{lem:retract}, with normal states $\sigma_j$ satisfying $\sigma_j(p_j)=1$, lifts the classical example to every von Neumann algebra with two nonzero orthogonal projections summing to one.
\end{example}

\begin{theorem}[Homogeneous benchmark]\label{thm:homogeneous}
Let $T=\Phi_*$ be a single normal channel predual.
\begin{enumerate}
\item Uniform mixing of $(T^n)$, uniform loss of memory, and the existence of $k\geq1$ with $\delta(T^k)<1$ are equivalent. Whenever they hold there is a unique invariant normal state $\omega$, and the convergence is exponential.
\item If an invariant normal state $\omega$ is given, pointwise loss of memory is equivalent to mixing to $\omega$; mean loss of memory is equivalent to mean ergodicity to $\omega$.
\item Mean ergodicity to one normal state is equivalent to the conjunction
\[
 \Sn\cap\Fix(T)\ne\varnothing,\qquad \Fix(\Phi)=\mathbb C\one.
\]
\end{enumerate}
\end{theorem}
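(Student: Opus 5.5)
For the homogeneous process both orders coincide, $F_{0,n}=B_{0,n}=T^n$, and the images $T^n(\Sn)$ are automatically nested. So I will prove part (1) from the backward theory of Section~\ref{sec:direction}.

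\textbf{Part (1).} First, uniform mixing implies uniform loss of memory by the triangle inequality. Next, uniform loss of memory means $2\delta(T^n)\to0$ by~\eqref{eq:diameter}, so some $k$ has $\delta(T^k)<1$. Conversely, suppose $\delta(T^k)=q<1$. Submultiplicativity in Lemma~\ref{lem:delta} gives
\[
\delta(T^n)\leq q^{\lfloor n/k\rfloor},
\]
which is Proposition~\ref{prop:windows} with $h(\ell)=\delta(T^\ell)$. This decay is exponential. Theorem~\ref{thm:nested}, applied to $P_n=T^n$, then yields a state $\omega$ with $\sup_\varphi\norm{T^n\varphi-\omega}\leq2q^{\lfloor n/k\rfloor}$, so convergence is exponential. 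For invariance, note $T^{n+1}\varphi\to T\omega$ by norm continuity, while the same sequence also tends to $\omega$; hence $T\omega=\omega$. For uniqueness, any invariant normal $\nu$ satisfies $\nu=T^n\nu\to\omega$.

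\textbf{Part (2).} This is Proposition~\ref{prop:anchor} with the constant anchor $\omega$. The common invariant state converts pointwise memory loss into mixing, and mean memory loss into mean ergodicity. The converses are part of the hierarchy in Proposition~\ref{prop:hierarchy}.

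\textbf{Part (3), necessity.} Assume $A_N\varphi\to\omega$ for every $\varphi$. Since $TA_N-A_N=N^{-1}(T^N-\id)$ has norm at most $2/N$, we get $T\omega=\omega$, so $\Sn\cap\Fix(T)\neq\varnothing$. Now let $x\in\Fix(\Phi)$. Then $\varphi(x)=\varphi(\Phi^j x)=(T^j\varphi)(x)$ for all $j$, hence $\varphi(x)=(A_N\varphi)(x)\to\omega(x)$. Thus every normal state takes the value $\omega(x)$ on $x$. Since normal states determine the order and separate points of $\M$, as used in Theorem~\ref{thm:minorization}, it follows that $x=\omega(x)\one$.

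\textbf{Part (3), sufficiency.} This is the main obstacle. I would use the mean ergodic theorem for the power-bounded operator $T$ on the Banach space $\M_*$. It says that $A_N f$ converges in norm exactly when $A_N f$ has a weakly convergent subnet, that the limit lies in $\Fix(T)$, and that norm convergence holds on the closed subspace $\Fix(T)\oplus\overline{\ran(\id-T)}$.

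Step one is to show that every $f\in\M_*$ lies in this subspace. Take a functional $x\in\M=(\M_*)^*$ annihilating $\overline{\ran(\id-T)}$. Then $\Phi(x)=x$, so by hypothesis $x=c\one$. Hence the closure of $\ran(\id-T)$ equals the annihilator of $\one$ in $\M_*$, namely $\{f: f(\one)=0\}$, by Hahn--Banach. This annihilator has codimension one, and the invariant state $\omega$, with $\omega(\one)=1$, spans a complement. So every $f$ decomposes as $f=f(\one)\omega+g$ with $g$ in the closed range.

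Step two is convergence on the range. For $g=(\id-T)h$ we have $\norm{A_Ng}\leq2\norm h/N\to0$. Because the averages $A_N$ are uniformly bounded contractions, this extends from the range to its closure by approximation. Combining both steps, $A_N f\to f(\one)\omega$ in norm; in particular $A_N\varphi\to\omega$ for every normal state $\varphi$.

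The delicate point is justifying the duality $\M_*^{\,*}=\M$ in the annihilator argument, with $(\id-T)^*=\id-\Phi$. I would also check that no weak compactness is needed, since we never pass through the weak closure.
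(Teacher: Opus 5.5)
Your proposal is correct and follows the paper's proof essentially step for step. For (1) it uses nested images with Theorem~\ref{thm:nested} and the bound $\delta(T^n)\leq q^{\lfloor n/k\rfloor}$; for (2) it uses Proposition~\ref{prop:anchor}; and for (3) it uses the identity $TA_N-A_N=(T^N-\id)/N$, the Hahn--Banach identification $\overline{\ran(\id-T)}=\{f\in\M_*:f(\one)=0\}$, and the splitting $f=f(\one)\omega+(f-f(\one)\omega)$. The only difference is that the appeal to the abstract mean ergodic theorem is superfluous, since your direct range-closure argument, like the paper's, already gives norm convergence.
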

\begin{proof}
Uniform mixing implies uniform memory loss. If $\delta(T^n)\to0$, some power has coefficient less than one. Conversely, if $q=\delta(T^k)<1$, then
$\delta(T^n)\leq q^{\lfloor n/k\rfloor}$.
The state images $T^n(\Sn)$ are nested, so Theorem~\ref{thm:nested} gives a normal state $\omega$ with
\[
 \sup_{\varphi\in\Sn}\norm{T^n\varphi-\omega}
 \leq2q^{\lfloor n/k\rfloor}.
\]
If $q>0$, this is an exponential bound with rate $q^{1/k}$ and prefactor $2/q$; if $q=0$, the products are rank one from time $k$ onward and an exponential estimate with any chosen rate follows after adjusting the finite prefactor. Passing to the limit in $T^{n+1}\varphi=T(T^n\varphi)$ proves $T\omega=\omega$. Any invariant normal state must equal this limit. This proves (1). Part (2) is Proposition~\ref{prop:anchor}.

For (3), suppose $A_Nf\to f(\one)\omega$ on states and hence on the complex linear span of states, which is all of $\M_*$. Since $TA_N-A_N=(T^N-\id)/N$, $T\omega=\omega$. If $\Phi x=x$, then
$\varphi(x)=(A_N\varphi)(x)\to\omega(x)$ for every normal state, so $x=\omega(x)\one$.

Conversely, let $T\omega=\omega$ and $\Fix(\Phi)=\mathbb C\one$. Hahn--Banach annihilator duality gives
\[
 \overline{\ran(\id-T)}^{\norm\cdot}
 =\{f\in\M_*: f(x)=0\ \text{for all }x\in\Fix(\Phi)\}
 =\{f\in\M_*:f(\one)=0\}.
\]
On $\ran(\id-T)$, telescoping gives
$A_N(\id-T)g=(g-T^Ng)/N\to0$. Since $\norm{A_N}\leq1$, this extends to the closure. Decomposing $f=f(\one)\omega+(f-f(\one)\omega)$ gives $A_Nf\to f(\one)\omega$.
\end{proof}

The uniform/exponential part is a standard Dobrushin consequence, recorded here for comparison with \cite{Muk13,Muk15}. It cannot be claimed as a new homogeneous theorem. Its relevance is that an inhomogeneous prefix bound less than one has no corresponding submultiplicative-power mechanism, as Example~\ref{ex:prefix} demonstrates.

\begin{example}[Unique invariant normal state is not enough]\label{ex:unique-invariant}
On $\ell^1(\N_0)$ define $Ve_0=e_0$ and $Ve_j=e_{j+1}$ for $j\geq1$. The only invariant probability vector is $e_0$: invariance first forces the mass at coordinate one to be zero, and then recursively forces all positive coordinates to have zero mass. But
\[
 \frac1N\sum_{k=0}^{N-1}V^ke_1
 =\frac1N\sum_{j=1}^Ne_j
\]
has no $\ell^1$ limit. Every coordinate tends to zero while its norm remains one. Thus uniqueness of a normal invariant state does not imply mean ergodicity in infinite dimension. Here $\Fix(V^*)$ consists of bounded functions constant on $\{1,2,\ldots\}$ with an independent value at zero; it is two dimensional, in agreement with Theorem~\ref{thm:homogeneous}(3). This example also lifts to any infinite-dimensional $\sigma$-finite $\M$: every invariant state of the lifted predual lies in $E_*\ell^1$, where the assertion reduces to the classical one.
\end{example}

\section{State-preserving Hilbert-space criteria}\label{sec:L2}

The counterexamples above do not claim that all distinctions persist under every additional state-preserving hypothesis. A useful positive criterion is available without a trace, and it illustrates what a common faithful normal state adds.

\begin{proposition}[GNS contraction criterion]\label{prop:GNS}
Suppose $\omega$ is a faithful normal state with $\omega\Phi_n=\omega$ for every $n$. On the GNS Hilbert space $L^2(\M,\omega)$, with cyclic vector $\Omega$, define
\[
 V_n(x\Omega)=\Phi_n(x)\Omega.
\]
Then $V_n$ extends to a contraction, fixes $\Omega$, and preserves $\Omega^\perp$. If there are $0\leq c_n\leq1$ such that
$\norm{V_n|_{\Omega^\perp}}\leq c_n$, then for either observable product $\Psi_{m,n}$, in either order,
\begin{equation}\label{eq:GNS}
 \norm{(\Psi_{m,n}(x)-\omega(x)\one)\Omega}
 \leq\left(\prod_{j=m}^{n-1}c_j\right)
       \norm{(x-\omega(x)\one)\Omega}.
\end{equation}
If $\prod_{j=m}^{n-1}c_j\to0$ for every fixed $m$, both corresponding state products are mixing in predual norm to $\omega$. The conclusion need not be uniform over all normal initial states.
\end{proposition}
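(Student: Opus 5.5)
The plan has three steps: establish the contraction properties of $V_n$, bound products through $\Omega^\perp$, and pass from the Hilbert-space estimate to predual convergence by duality and density.

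\emph{Contraction.} Since $\Phi_n$ is unital and completely positive, the Kadison--Schwarz inequality gives $\Phi_n(x)^*\Phi_n(x)\leq\Phi_n(x^*x)$. Applying $\omega$ and using $\omega\Phi_n=\omega$ gives
\[
\norm{\Phi_n(x)\Omega}^2\leq\omega(\Phi_n(x^*x))=\omega(x^*x)=\norm{x\Omega}^2 .
\]
So $V_n$ is well defined on $\M\Omega$, which is dense, and extends to a contraction. Unitality gives $V_n\Omega=\Omega$. For preservation of $\Omega^\perp$, note that $\langle\Omega,V_n x\Omega\rangle=\omega(\Phi_n(x))=\omega(x)=\langle\Omega,x\Omega\rangle$. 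Thus $V_n^*\Omega=\Omega$, and hence $V_n(\Omega^\perp)\subseteq\Omega^\perp$.

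\emph{Product estimate.} For any ordering, $\Psi_{m,n}$ is a composition of the $\Phi_j$, $m\leq j<n$, each used once. On $\M\Omega$ we have $\Psi_{m,n}(x)\Omega=W x\Omega$, where $W$ is the corresponding product of the $V_j$ in the same order. Since $\Psi_{m,n}$ is unital, $\Psi_{m,n}(x)-\omega(x)\one=\Psi_{m,n}(x-\omega(x)\one)$. The vector $(x-\omega(x)\one)\Omega$ lies in $\Omega^\perp$, and each $V_j$ maps $\Omega^\perp$ into itself with norm at most $c_j$. Multiplying these bounds gives \eqref{eq:GNS}.

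\emph{Predual mixing.} This is the step I expect to be the main obstacle, because \eqref{eq:GNS} controls $\Psi_{m,n}(x)$ only in the $L^2$ seminorm, not in operator norm. Let $P$ be either state product and $\Psi=P^*$ the dual observable product, so that $(P\varphi-\omega)(x)=\varphi(\Psi(x)-\omega(x)\one)$.

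First take a dense class of initial states. Since $\omega$ is faithful, $\Omega$ is cyclic for the commutant $\M'$. Hence functionals of the form $\varphi_{a}(x)=\langle a\Omega,x\,a\Omega\rangle$ with $a\in\M'$, $\norm{a\Omega}=1$, are norm dense in $\Sn$. Indeed, every normal state is a vector state $\langle\xi,\cdot\,\xi\rangle$ in the standard form, and $\norm{\omega_\xi-\omega_\eta}\leq\norm{\xi-\eta}\norm{\xi+\eta}$, so approximating $\xi$ by $a\Omega$ suffices.

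For such $\varphi_a$ and $\norm x\leq1$, write $y=\Psi(x)-\omega(x)\one$. Using that $a\in\M'$ commutes with $y$,
\[
|\varphi_a(y)|=|\langle a^*a\Omega,y\Omega\rangle|\leq\norm{a^*a\Omega}\,\norm{y\Omega}\leq\norm{a^*a\Omega}\,2\prod_{j=m}^{n-1}c_j .
\]
The bound uses $\norm{(x-\omega(x)\one)\Omega}\leq2$ for $\norm x\leq1$. Taking the supremum over $\norm x\leq1$ gives $\norm{P\varphi_a-\omega}\to0$ whenever $\prod c_j\to0$. An arbitrary normal state is a norm limit of such $\varphi_a$, and $P$ is a contraction, so an $\varepsilon/3$ argument gives mixing for every normal state. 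The constant $\norm{a^*a\Omega}$ depends on the initial state and is unbounded over $\Sn$, which explains the final remark in the statement that the convergence need not be uniform.
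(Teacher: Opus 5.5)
Your proposal is correct and follows essentially the paper's argument. The shared steps are Kadison--Schwarz plus invariance for the contraction, $V_n^*\Omega=\Omega$ for invariance of $\Omega^\perp$, and iteration for \eqref{eq:GNS}; you then apply Cauchy--Schwarz on a dense class built from $\M'$ (using cyclicity of $\Omega$ for the commutant) and extend by contractivity. The only difference is the density step. You approximate states directly by vector states $\langle a\Omega,\cdot\,a\Omega\rangle$, which relies on the true but deeper standard-form theorem that every normal state is a vector state when $\Omega$ is cyclic and separating. The paper needs only that finite sums of vector functionals are norm dense in $\M_*$, and works with the span of the functionals $\langle a'\Omega,\cdot\,\Omega\rangle$.
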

\begin{proof}
Kadison--Schwarz and invariance give
\[
 \norm{\Phi_n(x)\Omega}^2
 =\omega(\Phi_n(x)^*\Phi_n(x))
 \leq\omega(\Phi_n(x^*x))=\omega(x^*x).
\]
Also $\langle\Omega,V_nx\Omega\rangle=\omega(x)$, so $V_n$ fixes $\Omega$ and preserves its orthogonal complement. Iteration proves~\eqref{eq:GNS} for either order.

It remains to justify the predual conclusion, which is stronger than mere convergence of each fixed correlation. In this faithful normal GNS representation, $\Omega$ is cyclic and separating for $\M$ and cyclic for its commutant $\M'$. The linear span of functionals
\[
 f_a(x)=\langle a'\Omega,x\Omega\rangle,\qquad a'\in\M',
\]
is norm dense in $\M_*$. Here is a direct verification. Vector functionals span a norm-dense subspace of the predual of a faithful normal representation. Approximate each pair of vectors by $b'\Omega,c'\Omega$, using cyclicity for $\M'$. The corresponding coefficient is
$\langle b'\Omega,xc'\Omega\rangle=\langle c'^*b'\Omega,x\Omega\rangle$,
with the inner product taken linear in the second variable. Approximation in vector norm implies approximation in functional norm, proving the density.

For $f_a$, Cauchy--Schwarz and~\eqref{eq:GNS} give
\[
 \bigl|f_a(\Psi_{m,n}(x))-f_a(\one)\omega(x)\bigr|
 \leq\norm{a'\Omega}\left(\prod_{j=m}^{n-1}c_j\right)
       \norm{(x-\omega(x)\one)\Omega}
 \leq2\norm{a'\Omega}\norm x\prod_{j=m}^{n-1}c_j.
\]
Taking the supremum over $\norm x\leq1$ proves predual convergence for these functionals. The channel preduals and $R_\omega$ are contractions, so norm density extends convergence to every normal functional, in particular every normal state.
\end{proof}

This proposition requires a quantitative contraction of the GNS implementation on the centered space. Faithfulness or existence of a trace alone does not give such a contraction. No identification $\M\simeq\M_*$, no tracial cycling of factors, and no use of a density relative to $\omega$ has entered the proof. The estimate in~\eqref{eq:GNS} is an $L^2$ estimate on observables; its conversion to a predual estimate uses a dense class of functionals whose constants are not uniform over the whole state space.

\section{Inhomogeneous transfer states over infinite bond algebras}\label{sec:FCS}

This section proves Theorem~\ref{thm:intro-FCS}. The physical algebra at each site is $M_d=M_d(\mathbb C)$, and $\mathcal A_{[p,q]}=\bigotimes_{k=p}^qM_d$ for $p\leq q$. For $p\leq q<q'$, the algebra $\mathcal A_{[p,q]}$ is identified with $\mathcal A_{[p,q]}\otimes\one\subseteq\mathcal A_{[p,q']}$. The quasi-local algebra $\mathcal A$ is the norm closure of $\bigcup_N\mathcal A_{[1,N]}$. The bond algebra $\mathcal B$ is an arbitrary von Neumann algebra. Since $M_d$ is finite dimensional, $M_d\otimes\mathcal B=M_d(\mathcal B)$ is a von Neumann algebra.

\subsection{Transfer maps and finite-volume states}

Fix normal unital completely positive maps $\mathcal E_k:M_d\otimes\mathcal B\to\mathcal B$, $k\geq1$, and put
\[
 \Phi_k(b)=\mathcal E_k(\one\otimes b),\qquad T_k=(\Phi_k)_*,\qquad B_{m,n}=T_m\cdots T_{n-1}\quad(1\leq m\leq n),
\]
with $B_{m,m}=\id$. Each $\Phi_k$ is a normal unital completely positive map on $\mathcal B$. For $p\leq q$ define $\mathcal E_{[p,q]}:\mathcal A_{[p,q]}\otimes\mathcal B\to\mathcal B$ recursively by $\mathcal E_{[p,p]}=\mathcal E_p$ and
\begin{equation}\label{eq:block-transfer-right}
 \mathcal E_{[p,q]}(x\otimes y\otimes b)=\mathcal E_q\bigl(y\otimes\mathcal E_{[p,q-1]}(x\otimes b)\bigr),
 \qquad x\in\mathcal A_{[p,q-1]},\ y\in M_d,\ b\in\mathcal B,
\end{equation}
extended by linearity. Thus $\mathcal E_{[p,q]}=\mathcal E_q\circ(\id_{M_d}\otimes\mathcal E_{[p,q-1]})\circ\theta$, where $\theta:\mathcal A_{[p,q-1]}\otimes M_d\otimes\mathcal B\to M_d\otimes\mathcal A_{[p,q-1]}\otimes\mathcal B$ is the normal $*$-isomorphism that moves the factor $M_d$ to the left. The amplification $\id_{M_d}\otimes\mathcal E_{[p,q-1]}$ of a normal unital completely positive map to $M_d(\,\cdot\,)$ is again normal, unital and completely positive. By induction, every $\mathcal E_{[p,q]}$ has these three properties. In this recursion site $q$ is outermost, and the bond variable $b$ enters at site $p$.

For $x\in\mathcal A_{[p,q]}$ write
\[
 \mathcal E_{[p,q]}^{x}(b)=\mathcal E_{[p,q]}(x\otimes b),\qquad
 y_x=\mathcal E_{[1,m]}(x\otimes\one)\quad(x\in\mathcal A_{[1,m]}).
\]
A unital completely positive map is contractive and $\norm{x\otimes b}=\norm x\norm b$. Hence $\mathcal E^x_{[p,q]}$ is a normal map with $\norm{\mathcal E^x_{[p,q]}}\leq\norm x$, and $\norm{y_x}\leq\norm x$. If $x=x^*$, then $\mathcal E^x_{[p,q]}$ preserves adjoints.

\begin{definition}
For $N\geq1$ and $\sigma\in\mathcal S_{\mathrm n}(\mathcal B)$, the \emph{finite-volume state with right boundary condition $\sigma$} is
\[
 \varphi_N^\sigma(x)=\sigma\bigl(\mathcal E_{[1,N]}(x\otimes\one)\bigr),\qquad x\in\mathcal A_{[1,N]}.
\]
\end{definition}

The functional $\varphi_N^\sigma$ is a state on $\mathcal A_{[1,N]}$, since $x\mapsto\mathcal E_{[1,N]}(x\otimes\one)$ is unital and positive. The left end of the chain is closed by the unit of $\mathcal B$, and the boundary state acts after the transfer map of site $N$. This is the convention of \cite{Sou26}, in which channels are trace preserving in the Schr\"odinger picture and therefore unital in the Heisenberg picture.

\begin{lemma}[Block factorisation]\label{lem:local-restriction}
For $p\leq s<q$, $x\in\mathcal A_{[p,s]}$, $w\in\mathcal A_{[s+1,q]}$ and $b\in\mathcal B$,
\begin{equation}\label{eq:assoc-right}
 \mathcal E_{[p,q]}(x\otimes w\otimes b)=\mathcal E^{w}_{[s+1,q]}\bigl(\mathcal E_{[p,s]}(x\otimes b)\bigr),
 \qquad \mathcal E^{\one}_{[s+1,q]}=\Phi_q\circ\cdots\circ\Phi_{s+1}.
\end{equation}
Consequently, for $1\leq m<N$, $x\in\mathcal A_{[1,m]}$ and $\sigma\in\mathcal S_{\mathrm n}(\mathcal B)$,
\begin{equation}\label{eq:local-restriction}
 \varphi_N^\sigma(x)=\bigl(B_{m+1,N+1}\sigma\bigr)(y_x),\qquad y_{x\otimes\one}=\Phi_{m+1}(y_x).
\end{equation}
If moreover $z\in\mathcal A_{[m+\ell+1,n]}$ with $\ell\geq1$ and $n>m+\ell$, then for every $\nu\in\mathcal S_{\mathrm n}(\mathcal B)$,
\begin{equation}\label{eq:two-block}
 \nu\bigl(\mathcal E_{[1,n]}(x\otimes\one_\ell\otimes z\otimes\one)\bigr)
 =\bigl(B_{m+1,m+\ell+1}\,g\bigr)(y_x),\qquad g=\nu\circ\mathcal E^{z}_{[m+\ell+1,n]},
\end{equation}
where $\one_\ell$ denotes the unit of $\mathcal A_{[m+1,m+\ell]}$.
\end{lemma}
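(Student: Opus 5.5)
The plan is to prove \eqref{eq:assoc-right} first, by induction on $q$ with $p$ and $s$ fixed, and then obtain \eqref{eq:local-restriction} and \eqref{eq:two-block} by specialisation and duality.

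\emph{Base case.} For $q=s+1$ we have $w=y\in M_d$. The recursion \eqref{eq:block-transfer-right} reads directly
\[
 \mathcal E_{[p,s+1]}(x\otimes y\otimes b)=\mathcal E_{s+1}\bigl(y\otimes\mathcal E_{[p,s]}(x\otimes b)\bigr)=\mathcal E^{y}_{[s+1,s+1]}\bigl(\mathcal E_{[p,s]}(x\otimes b)\bigr).
\]

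\emph{Inductive step.} For $q>s+1$ write $w=w'\otimes y$ with $w'\in\mathcal A_{[s+1,q-1]}$ and $y\in M_d$. Applying \eqref{eq:block-transfer-right} once and then the induction hypothesis gives
\[
 \mathcal E_{[p,q]}(x\otimes w'\otimes y\otimes b)
 =\mathcal E_q\bigl(y\otimes\mathcal E^{w'}_{[s+1,q-1]}(\mathcal E_{[p,s]}(x\otimes b))\bigr).
\]
The same recursion, now applied to $\mathcal E_{[s+1,q]}$ with bond variable $b'=\mathcal E_{[p,s]}(x\otimes b)$, identifies the right-hand side with $\mathcal E^{w}_{[s+1,q]}(b')$. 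Both sides are linear in $w$, so the identity holds on the algebraic span of elementary tensors, which is all of the finite-dimensional factor $\mathcal A_{[s+1,q]}$.

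\emph{Linearity in $x$.} Only linearity in $x$ is needed as well: elementary tensors $x\otimes b$ with $x$ ranging over the finite-dimensional $\mathcal A_{[p,s]}$ suffice, because everything is stated on such tensors. The only point needing care is that \eqref{eq:block-transfer-right} is stated on elementary tensors with arbitrary $b$, which is exactly the form used. The formula $\mathcal E^{\one}_{[s+1,q]}=\Phi_q\circ\cdots\circ\Phi_{s+1}$ follows from the same induction with $w=\one$, since $\mathcal E_k(\one\otimes b)=\Phi_k(b)$.

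\emph{Proof of \eqref{eq:local-restriction}.} Take $p=1$, $s=m$, $q=N$, $w=\one$ and $b=\one$. This gives
\[
 \mathcal E_{[1,N]}(x\otimes\one)=\Phi_N\cdots\Phi_{m+1}(y_x).
\]
By \eqref{eq:duality-orders}, $(\Phi_N\cdots\Phi_{m+1})^*$ restricted to the predual is $T_{m+1}\cdots T_N=B_{m+1,N+1}$. Evaluating $\sigma$ therefore yields $\varphi_N^\sigma(x)=(B_{m+1,N+1}\sigma)(y_x)$. The case $N=m+1$ of the same identity gives $y_{x\otimes\one}=\Phi_{m+1}(y_x)$.

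\emph{Proof of \eqref{eq:two-block}.} Apply \eqref{eq:assoc-right} twice. First split at $s=m+\ell$, with inner block $x\otimes\one_\ell$ and outer block $z$, and bond variable $b=\one$. Then split the inner factor $\mathcal E_{[1,m+\ell]}(x\otimes\one_\ell\otimes\one)$ at $s=m$. This gives
\[
 \nu\bigl(\mathcal E^{z}_{[m+\ell+1,n]}(\Phi_{m+\ell}\cdots\Phi_{m+1}(y_x))\bigr).
\]
Since $\mathcal E^z_{[m+\ell+1,n]}$ is normal, $g=\nu\circ\mathcal E^z_{[m+\ell+1,n]}$ lies in $\mathcal B_*$, although in general it is not a state. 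Dualising $\Phi_{m+\ell}\cdots\Phi_{m+1}$ gives $B_{m+1,m+\ell+1}g$ evaluated at $y_x$.

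\emph{Main obstacle.} The only real difficulty is bookkeeping of the order. One must check that the recursion makes site $q$ outermost, so that the observable product appears as $\Phi_q\circ\cdots\circ\Phi_{s+1}$ and its predual is the backward product $B$. This follows from \eqref{eq:duality-orders}.
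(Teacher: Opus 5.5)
Your proposal is correct and follows the paper's proof essentially step for step. You use induction on $q$ via the recursion \eqref{eq:block-transfer-right}, with linearity reducing to $w=w'\otimes y$; then the specialisation $w=\one$, $b=\one$, dualised through $(\Phi\circ\Psi)_*=\Psi_*\circ\Phi_*$; and two successive splittings at $s=m+\ell$ and $s=m$ for \eqref{eq:two-block}, including the correct observation that $g$ is a normal, generally non-state, functional on which $B_{m+1,m+\ell+1}$ acts as a predual map.
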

\begin{proof}
By linearity it suffices to take $w=w'\otimes y$ with $w'\in\mathcal A_{[s+1,q-1]}$ and $y\in M_d$ when $q>s+1$. For $q=s+1$, the first identity in~\eqref{eq:assoc-right} is~\eqref{eq:block-transfer-right}. Assume it for $q-1$. Then
\[
 \mathcal E_{[p,q]}(x\otimes w'\otimes y\otimes b)
 =\mathcal E_q\bigl(y\otimes\mathcal E_{[p,q-1]}(x\otimes w'\otimes b)\bigr)
 =\mathcal E_q\bigl(y\otimes\mathcal E^{w'}_{[s+1,q-1]}(\mathcal E_{[p,s]}(x\otimes b))\bigr),
\]
and the right-hand side is $\mathcal E^{w'\otimes y}_{[s+1,q]}(\mathcal E_{[p,s]}(x\otimes b))$ by~\eqref{eq:block-transfer-right} applied on $[s+1,q]$. The formula for $\mathcal E^{\one}_{[s+1,q]}$ follows by the same induction, since $\mathcal E_q(\one\otimes b)=\Phi_q(b)$.

For~\eqref{eq:local-restriction}, apply~\eqref{eq:assoc-right} with $p=1$, $s=m$, $q=N$, $w=\one$ and $b=\one$. This gives $\mathcal E_{[1,N]}(x\otimes\one\otimes\one)=\Phi_N\circ\cdots\circ\Phi_{m+1}(y_x)$. The predual of $\Phi_N\circ\cdots\circ\Phi_{m+1}$ is $T_{m+1}\cdots T_N=B_{m+1,N+1}$. The case $N=m+1$ gives $y_{x\otimes\one}=\Phi_{m+1}(y_x)$. For~\eqref{eq:two-block}, apply~\eqref{eq:assoc-right} first with $s=m+\ell$, $w=z$, and then with $s=m$, $w=\one_\ell$. This gives
\[
 \mathcal E_{[1,n]}(x\otimes\one_\ell\otimes z\otimes\one)
 =\mathcal E^z_{[m+\ell+1,n]}\bigl(\Phi_{m+\ell}\circ\cdots\circ\Phi_{m+1}(y_x)\bigr).
\]
Applying $\nu$ and passing to preduals completes the proof.
\end{proof}

\subsection{Thermodynamic limit and decay of correlations}

\begin{proof}[Proof of Theorem~\ref{thm:intro-FCS}]
By Theorem~\ref{thm:entrance}, applied to the sequence $(T_k)_{k\geq1}$, there is a unique family $(\omega_k)_{k\geq1}$ of normal states of $\mathcal B$ with $\omega_k=T_k\omega_{k+1}$, and
\begin{equation}\label{eq:FCS-entrance}
 \sup_{\sigma\in\mathcal S_{\mathrm n}(\mathcal B)}\norm{B_{k,N+1}\sigma-\omega_k}\leq2\,\delta(B_{k,N+1}).
\end{equation}
For $x\in\mathcal A_{[1,m]}$ define $\varphi_\infty(x)=\omega_{m+1}(y_x)$. This is consistent: if $x\in\mathcal A_{[1,m]}$ is regarded as $x\otimes\one\in\mathcal A_{[1,m+1]}$, then $y_{x\otimes\one}=\Phi_{m+1}(y_x)$ by Lemma~\ref{lem:local-restriction}. Hence
\[
 \omega_{m+2}\bigl(\Phi_{m+1}(y_x)\bigr)=(T_{m+1}\omega_{m+2})(y_x)=\omega_{m+1}(y_x).
\]
Thus $\varphi_\infty$ is a well-defined linear functional on $\bigcup_m\mathcal A_{[1,m]}$. It is positive and unital on each $\mathcal A_{[1,m]}$, because $x\mapsto y_x$ is unital and positive. It therefore has norm one and extends uniquely to a state on $\mathcal A$.

By~\eqref{eq:local-restriction} and~\eqref{eq:FCS-entrance}, for $x\in\mathcal A_{[1,m]}$,
\[
 |\varphi_N^\sigma(x)-\varphi_\infty(x)|
 =\bigl|(B_{m+1,N+1}\sigma-\omega_{m+1})(y_x)\bigr|
 \leq2\,\delta(B_{m+1,N+1})\norm x.
\]
This is the first estimate, since $B_{m+1,N+1}=T_{m+1}\cdots T_N$. Uniqueness follows because the right-hand side tends to zero for every fixed $m$: a state with the same property has the same restriction to every $\mathcal A_{[1,m]}$, and these restrictions determine a state on $\mathcal A$.

For the clustering bound, let $n=m+\ell+r$ and put $g=\omega_{n+1}\circ\mathcal E^z_{[m+\ell+1,n]}$, a normal functional with $\norm g\leq\norm z$. By definition of $\varphi_\infty$ and~\eqref{eq:two-block},
\[
 \varphi_\infty(x\otimes\one_\ell\otimes z)=\bigl(B_{m+1,m+\ell+1}g\bigr)(y_x),
 \qquad \varphi_\infty(z)=g(\one),
\]
where the second identity is~\eqref{eq:two-block} with $x=\one$, since $y_{\one}=\one$ and $(B_{m+1,m+\ell+1}g)(\one)=g(\one)$. Also $\varphi_\infty(x)=\omega_{m+1}(y_x)=(B_{m+1,m+\ell+1}\omega_{m+\ell+1})(y_x)$. Hence
\[
 \varphi_\infty(x\otimes\one_\ell\otimes z)-\varphi_\infty(x)\varphi_\infty(z)
 =\bigl(B_{m+1,m+\ell+1}h\bigr)(y_x),\qquad h=g-g(\one)\,\omega_{m+\ell+1}.
\]
Suppose first that $z=z^*$. Then $g$ is self-adjoint, $h(\one)=0$, and $\norm h\leq2\norm g\leq2\norm z$. Therefore $h\in E_0(\mathcal B)$ and
\[
 \bigl|(B_{m+1,m+\ell+1}h)(y_x)\bigr|\leq\delta(B_{m+1,m+\ell+1})\norm h\norm{y_x}
 \leq2\,\delta(B_{m+1,m+\ell+1})\norm x\norm z.
\]
For general $z$, write $z=z_1+iz_2$ with $z_j$ self-adjoint and $\norm{z_j}\leq\norm z$, and use linearity in $z$. This gives the factor $4$. Finally, $B_{m+1,m+\ell+1}=T_{m+1}\cdots T_{m+\ell}$.
\end{proof}

\begin{corollary}[Quantitative regimes]\label{cor:FCS-rates}
Under the notation of Theorem~\ref{thm:intro-FCS}, put $\beta_k=\beta(T_k)$.
\begin{enumerate}
\item If $\sum_k\beta_k=\infty$, the hypothesis of Theorem~\ref{thm:intro-FCS} holds, with
\[
 \bigl\|\varphi_N^\sigma|_{\mathcal A_{[1,m]}}-\varphi_\infty|_{\mathcal A_{[1,m]}}\bigr\|\leq2\prod_{k=m+1}^{N}(1-\beta_k).
\]
\item If there are $L\geq1$ and $q<1$ with $\sup_{m\geq0}\delta(T_{m+1}\cdots T_{m+L})\leq q$, then boundary dependence and correlations decay exponentially, uniformly in position:
\[
 \bigl\|\varphi_N^\sigma|_{\mathcal A_{[1,m]}}-\varphi_\infty|_{\mathcal A_{[1,m]}}\bigr\|\leq2\,q^{\lfloor(N-m)/L\rfloor},
\]
\[
 \bigl|\varphi_\infty(x\otimes\one_\ell\otimes z)-\varphi_\infty(x)\varphi_\infty(z)\bigr|\leq4\,q^{\lfloor\ell/L\rfloor}\norm x\norm z.
\]
\end{enumerate}
\end{corollary}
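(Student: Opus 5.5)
The plan is to obtain both parts by feeding the contraction estimates of Sections~\ref{sec:rates} into the two bounds of Theorem~\ref{thm:intro-FCS}. The identification used throughout is $T_{m+1}\cdots T_N=B_{m+1,N+1}$, which is a backward product of the sequence $(T_k)_{k\geq1}$.

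For part (1), Theorem~\ref{thm:rates} (applied to the sequence indexed from $1$) gives
\[
\delta(B_{m+1,N+1})\leq\prod_{k=m+1}^{N}(1-\beta_k)\leq\exp\Bigl(-\sum_{k=m+1}^N\beta_k\Bigr).
\]
Divergence of $\sum_k\beta_k$ implies divergence of every tail $\sum_{k\geq m+1}\beta_k$, because only finitely many terms, each in $[0,1]$, are removed. Hence $\delta(T_m\cdots T_{n-1})\to0$ as $n\to\infty$ for every $m\geq1$, which is exactly the hypothesis of Theorem~\ref{thm:intro-FCS}. Inserting the product bound into the first estimate of that theorem yields the displayed inequality.

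For part (2), put $h(\ell)=\sup_{m\geq0}\delta(T_{m+1}\cdots T_{m+\ell})$. The sequence is indexed from $1$, so this is the function of Proposition~\ref{prop:windows} after a shift of indices. The hypothesis is $h(L)\leq q<1$, and Proposition~\ref{prop:windows} gives $h(\ell)\leq q^{\lfloor\ell/L\rfloor}$. Concretely, the argument splits the window $[m+1,m+\ell]$ into $\lfloor\ell/L\rfloor$ consecutive blocks of length $L$ plus a remainder. Submultiplicativity from Lemma~\ref{lem:delta} controls the product, and the remainder coefficient is at most $1$.

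In particular, $\delta(T_{m+1}\cdots T_N)\leq q^{\lfloor(N-m)/L\rfloor}\to0$ for every fixed $m$. So the hypothesis of Theorem~\ref{thm:intro-FCS} holds, and its first estimate gives the boundary bound. Applying its clustering estimate with window $T_{m+1}\cdots T_{m+\ell}$ gives the factor $4q^{\lfloor\ell/L\rfloor}\norm x\norm z$. Both bounds are uniform in $m$ because the window bound is a supremum over starting positions.

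No step is a real obstacle; the only care needed is the index bookkeeping. The case $m=0$ in the supremum matches the first site $k=1$, and the backward product $B_{m+1,N+1}$ runs over $N-m$ factors. If $q=0$, the bounds remain valid with the convention $0^0=1$.
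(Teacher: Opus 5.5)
Your proposal is correct and matches the paper's proof: part (1) combines the product bound \eqref{eq:product-rate} with the first estimate of Theorem~\ref{thm:intro-FCS}, and part (2) applies Proposition~\ref{prop:windows} to the backward products of $(T_k)_{k\geq1}$ and bounds both estimates by the window coefficient $h$ at the relevant product length. You also check explicitly that the hypothesis of Theorem~\ref{thm:intro-FCS} holds in part (2), and you carry out the index shift; the paper leaves both points implicit.
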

\begin{proof}
Part (1) follows from Theorem~\ref{thm:intro-FCS} and~\eqref{eq:product-rate}, since $\delta(T_{m+1}\cdots T_N)\leq\prod_{k=m+1}^N(1-\beta_k)$ and the product tends to zero for every fixed $m$ when $\sum_k\beta_k=\infty$. For part (2), apply Proposition~\ref{prop:windows} to the backward products of $(T_k)_{k\geq1}$: the window coefficient satisfies $h(\ell)\leq q^{\lfloor\ell/L\rfloor}$, and both estimates of Theorem~\ref{thm:intro-FCS} are bounded by $h$ evaluated at the length of the relevant product.
\end{proof}

\subsection{Connection with matrix product states}

Let $\mathcal B=M_D$ and let $K^{[k]}_1,\ldots,K^{[k]}_d\in M_D$ satisfy the left-canonical condition $\sum_iK^{[k]\dagger}_iK^{[k]}_i=\one$. Define the isometry $V_k:\mathbb C^D\to\mathbb C^d\otimes\mathbb C^D$ by $V_k\xi=\sum_j|j\rangle\otimes K^{[k]}_j\xi$, and set
\[
 \mathcal E_k(y\otimes b)=V_k^\dagger(y\otimes b)V_k
 =\sum_{i,j=1}^d\langle i|y|j\rangle\,K^{[k]\dagger}_i\,b\,K^{[k]}_j.
\]
Then $\mathcal E_k$ is unital and completely positive, and $\Phi_k(b)=\sum_iK^{[k]\dagger}_ibK^{[k]}_i$ is the Heisenberg dual of the trace-preserving channel $\rho\mapsto\sum_iK^{[k]}_i\rho K^{[k]\dagger}_i$. Iterating~\eqref{eq:block-transfer-right} gives, for $b=\one$,
\[
 \mathcal E_{[1,N]}\bigl(|i_1\cdots i_N\rangle\langle j_1\cdots j_N|\otimes\one\bigr)
 =K^{[N]\dagger}_{i_N}\cdots K^{[1]\dagger}_{i_1}K^{[1]}_{j_1}\cdots K^{[N]}_{j_N}.
\]
For a boundary density $\sigma$, therefore,
\[
 \varphi^\sigma_N\bigl(|i_1\cdots i_N\rangle\langle j_1\cdots j_N|\bigr)
 =\Tr\bigl(K^{[1]}_{j_1}\cdots K^{[N]}_{j_N}\,\sigma\,K^{[N]\dagger}_{i_N}\cdots K^{[1]\dagger}_{i_1}\bigr).
\]
This is the finite-volume matrix product expression with right boundary $\sigma$. The limiting formula $\varphi_\infty(x)=\omega_{m+1}(y_x)$ is the boundary-state representation of \cite[Theorem~1.3]{Sou26}, with $(\omega_k)$ the backward entrance law. In this finite-dimensional case, Theorem~\ref{thm:minorization} reduces the minorization hypothesis to the Markov--Dobrushin condition of \cite{Sou26,SB25}, and the coefficients $\delta(T_{m+1}\cdots T_N)$ are the product trace-Dobrushin coefficients of \cite{P26}.

\begin{example}[An infinite bond algebra]\label{ex:infinite-bond}
Let $K$ be an infinite-dimensional separable Hilbert space and $\mathcal B=B(K)$. Fix unitaries $U_k:K\to\mathbb C^d\otimes K$, a normal state $\nu$ of $\mathcal B$, states $\psi_k$ of $M_d$, and numbers $p_k\in[0,1]$. Define
\[
 \mathcal E_k(y\otimes b)=(1-p_k)\,U_k^*(y\otimes b)U_k+p_k\,\psi_k(y)\,\nu(b)\,\one.
\]
Each $\mathcal E_k$ is normal, unital and completely positive. Moreover
\[
 T_k=(1-p_k)T_k'+p_kR_\nu,\qquad T_k'=\bigl(U_k^*(\one\otimes\,\cdot\,)U_k\bigr)_*,
\]
so $T_k\varphi\geq p_k\nu$ for every normal state $\varphi$, and $\beta(T_k)\geq p_k$. If $\sum_kp_k=\infty$, Corollary~\ref{cor:FCS-rates}(1) applies. If $p_k\geq p>0$ for all $k$, it gives exponential decay of boundary dependence and of correlations, with $L=1$ and $q=1-p$. Here the bond algebra is a type $\mathrm I_\infty$ factor. More generally, replace $U_k^*(\,\cdot\,)U_k$ by a normal $*$-isomorphism $\pi_k:M_d\otimes\mathcal B\to\mathcal B$. Such isomorphisms exist whenever $\mathcal B$ is properly infinite, in particular for every type $\mathrm{III}$ factor, and when $\mathcal B$ is the hyperfinite $\mathrm{II}_1$ factor, whose fundamental group is $\R_+^*$. The same conclusions hold verbatim.
\end{example}

\begin{remark}[Uniformity is necessary]\label{rem:FCS-pointwise}
The estimate in Theorem~\ref{thm:intro-FCS} is uniform in the boundary state. The following example shows that pointwise mixing of the backward products does not suffice for boundary independence of the local states. Let $d\geq2$, $\mathcal B=\ell^\infty(\N_0)$, and let $Q_k$, $s_k$, $r_k$ be the maps of Theorem~\ref{thm:reservoir} with $a_n\equiv0$. Write $\epsilon_i(y)=\langle i|y|i\rangle$ for $i\in\{0,1\}$. Define
\[
 \bigl(\mathcal E_1(y\otimes b)\bigr)(j)=\epsilon_{j\bmod2}(y)\,b(s_1(j)),\qquad
 \mathcal E_k(y\otimes b)=\epsilon_0(y)\,Q_k^*b\quad(k\geq2).
\]
Every coordinate of $\mathcal E_1$ is the normal product state $\epsilon_{j\bmod 2}\otimes\mathrm{ev}_{s_1(j)}$. A unital positive map into a commutative von Neumann algebra whose coordinates are normal states is normal and completely positive. Thus $\mathcal E_1$ is admissible, and so is each $\mathcal E_k=\epsilon_0\otimes Q_k^*$. Moreover $\Phi_k=Q_k^*$ for all $k$.

For $x\in M_d$ at site $1$ and $\sigma=e_j$, Lemma~\ref{lem:local-restriction} gives
\[
 \varphi_N^{e_j}(x)=\bigl(Q_2\cdots Q_Ne_j\bigr)(y_x)=\epsilon_{t\bmod 2}(x),\qquad t=s_2\circ\cdots\circ s_N(j),
\]
because $y_x(t)=\epsilon_{t\bmod2}(x)$. Since $t\in\{u_1,v_1\}=\{4,5\}$ and $r_1(t)=t\bmod2$ for $t>1$, we get $t\bmod2=r_1(t)=r_N(j)$. With $a_n\equiv0$, $r_N(j)=0$ for $j\leq N$ and $r_N(j)=j\bmod 2$ for $j>N$. Hence, for every fixed probability vector $p$,
\[
 \varphi_N^{p}(x)\longrightarrow\epsilon_0(x).
\]
The relevant backward products are mixing: $Q_1\cdots Q_Np\to e_0$ by Theorem~\ref{thm:reservoir}, and $Q_2\cdots Q_Np\to e_4$, because $t=4$ whenever $r_N(j)=0$, in particular for all $j\leq N$. But for the boundary sequence $\sigma_N=e_{2N+3}$ one has $\varphi_N^{\sigma_N}(x)=\epsilon_1(x)$ for every $N$, so the local state at site $1$ depends on the boundary in the limit.

The same example lives on every infinite-dimensional $\sigma$-finite bond algebra. With $J,E$ as in Lemma~\ref{lem:retract}, replace $\mathcal E_k$ by $J\circ\mathcal E_k\circ(\id_{M_d}\otimes E)$. Since $EJ=\id$, the block maps become $J\circ\mathcal E_{[1,N]}\circ(\id\otimes E)$, and the finite-volume states become $\varphi_N^{J_*\sigma}$.
\end{remark}

\section{Consequences and limits of the extension}\label{sec:conclusion}

The preceding results give the following precise conclusions. First, uniform backward memory loss implies uniform convergence to a normal state on every von Neumann algebra, with no compactness hypothesis. Applying this from every starting time yields a unique normal entrance law and quantitative independence of terminal boundary conditions. This extends the backward contraction mechanism of the matrix setting at the uniform level.

Second, the corresponding pointwise assertion is universally valid exactly in finite dimension. In infinite dimension, nested, collectively compact state images and even finite-rank products do not suffice: the reservoir construction in Theorem~\ref{thm:reservoir} gives a counterexample on every infinite-dimensional $\sigma$-finite algebra. Thus the obstruction is not the distinction between types I, II, and III, nor merely the lack of compact output sets. It is the failure to control moving initial states when passing from pointwise to uniform convergence.

Third, the attained coefficient $\beta(T)$ makes the common-lower-bound criterion meaningful in the absence of rank-one projections, traces, or normal pure states. It provides an upper bound on the optimal contraction coefficient $\delta(T)$, not an equality in general and not a CP-order decomposition. The construction avoids a nonexistent noncommutative lattice infimum and retains normality by domination.

Fourth, the backward theory applies directly to quantum spin chains. For transfer maps with values in an arbitrary von Neumann algebra, uniform backward memory loss gives a unique thermodynamic limit, uniformly in the boundary condition. The same product coefficients bound the decay of correlations, which recovers the finite-dimensional matrix product state picture of \cite{Sou26,P26} for an infinite-dimensional bond algebra. Remark~\ref{rem:FCS-pointwise} shows that this uniformity cannot be relaxed to pointwise mixing.

Finally, mixing, ergodicity, and memory loss address three separate questions: whether a trajectory converges, whether its time average converges, and whether initial preparations become indistinguishable. Composition order supplies a fourth distinction, and uniformity over starting times supplies a fifth. The homogeneous benchmark removes some of these distinctions through the power structure; time-dependent channels do not have that mechanism. The examples above identify exactly which additional assumptions are needed in the stated theorems, rather than attributing convergence to an algebra type or to unproved compactness intuitions.

\end{document}